\documentclass[12pt]{article}
\usepackage{amsfonts}
\usepackage{amssymb}
\usepackage{mathrsfs}
\textwidth 155mm \textheight 225mm \oddsidemargin 15pt
\evensidemargin 0pt \topmargin 0cm \headsep 0.3cm

\usepackage{amsmath}
\usepackage{amsthm}
\usepackage{amstext}
\usepackage{amsopn}
\usepackage{graphicx}
\usepackage{color}
\newtheorem{theorem}{Theorem}[section]
\newtheorem{lemma}[theorem]{Lemma}
\newtheorem{proposition}[theorem]{Proposition}

\theoremstyle{definition}

\newtheorem{example}[theorem]{Example}

\theoremstyle{remark}
\newtheorem{remark}[theorem]{Remark}

\numberwithin{equation}{section}

\newcommand{\ba}{\begin{array}}
\newcommand{\ea}{\end{array}}
\newcommand{\f}{\frac}

\newcommand{\Om}{\Omega}

\newcommand{\la}{\lambda}

\newcommand{\ds}{\displaystyle}

\begin{document}
\date{}
\title{ \bf\large{Hopf bifurcation for a delayed diffusive logistic population model in the advective heterogeneous environment}\footnote{This research is supported by the National Natural Science Foundation of China (No 11771109)}}
\author{Shanshan Chen\footnote{Email: chenss@hit.edu.cn},\ \  Junjie Wei\footnote{Corresponding Author, Email: weijj@hit.edu.cn},\ \ Xue Zhang
 \\
{\small  Department of Mathematics,  Harbin Institute of Technology,\hfill{\ }}\\
 {\small Weihai, Shandong, 264209, P.R.China.\hfill{\ }}}
\maketitle

\begin{abstract}
{In this paper, we investigate a delayed reaction-diffusion-advection equation, which models the population
dynamics in the advective heterogeneous environment. The existence of the nonconstant positive steady state
and associated Hopf bifurcation are obtained.
A weighted inner product associated with the advection rate is introduced to compute the normal forms, which is the main difference between Hopf bifurcation for delayed reaction-diffusion-advection model and that for delayed reaction-diffusion model.
Moreover, we find that the spatial scale and advection can affect Hopf bifurcation in the heterogenous environment.
}

 {\emph{Keywords}}: Reaction-diffusion-advection; Flow;
Delay; Hopf bifurcation
\end{abstract}

\section {Introduction}

In recent decades, there are extensive works on the population
dynamics in the advective environments. For example, the population may have a tendency towards better quality habitat, and Belgacem and Cosner \cite{Belgacem1995} proposed the following model
\begin{equation}\label{taxis}
\begin{cases}
\ds\frac{\partial u}{\partial t} =\nabla\cdot[d\nabla u-au\nabla m]+
u\left[m(x)-u\right],&x\in\Om,\ t>0,\\
u(x,t)=0,&  x\in\partial\Om,\ t>0,\\
\end{cases}
\end{equation}
where $a$ measures the tendency of the population to move up or down along the gradient of $m(x)$.
We refers to \cite{CantrellB,Cantrell2006,ChenHL2008,Cosner2003,Lamk2011} and the references therein for results on this type of advection.
Moreover, in streams and rivers, the unidirectional water flow always exists and
can influence the population
dynamics of the river species \cite{JinY2014,Lutscher2006,Lutscher2007,JinY2012}.
Lou and Zhou \cite{LouY2015} considered the following single species model,
\begin{equation}\label{flowR}
\begin{cases}
\ds\frac{\partial u}{\partial t} =du_{xx}-\alpha u_x+
u\left(r-u\right),&0<x<L,\ t>0,\\
du_x(0,t)-\alpha u(0,t)=0,& t>0,\\
du_x(L,t)-\alpha u(L,t)=-b\alpha u(L,t), &t>0,
\end{cases}
\end{equation}
where $u(x,t)$ denotes the population density at location $x$ and time $t$, $d>0$ is the diffusion rate, $r>0$ represents the intrinsic growth rate, $x=0,L$ are the upstream end and downstream ends respectively, $\alpha$ accounts for the advection rate
caused by the unidirectional water flow, and $b$ measures the lose of the species at the downstream end. Eq. \eqref{flowR} can also model the population dynamics of a species in a water column, where $x$ runs from the top ($x=0$) to the bottom ($x=L$).
Therefore, $\alpha$ may be positive or negative depending on whether the density of the species is heavier or lighter
than the water \cite{ZhouP2016}. If $b\to \infty$, the hostile boundary condition at the downstream is obtained, and Speirs and Gurney
showed \cite{Speirs2001} that
the species can persist only when the speed of the flow is slow and the stream is long. If $b=1$, the boundary condition is referred to as the free-flow boundary condition or the Danckwerts boundary condition, see \cite{Vasilyeva2011} for detailed analysis on persistence. For more general case, Lou and Zhou \cite{LouY2015} gave the necessary and sufficient condition for the persistence of the species with respect to $b$. We also refer to \cite{LouY2014,ZhouLX2016,LouY2015,ZhouP2016,ZhouP2017} and the references therein for results on two competing species with this type of advection.

For reaction-diffusion equations without advection term, it is well-known that time delay can make the constant steady states or nonconstant steady states unstable, and spatial homogeneous or nonhomogeneous periodic solutions can occur through Hopf bifurcation, see \cite{faria2000,Faria2001,gourley2004nonlocality,Hadeler,Hu,Lee,Sen} and the references therein. Especially, Busenberg
and Huang \cite{busenberg1996stability} first studied the Hopf bifurcation near the nonconstant positive steady state, and they found that, for the following single population model,
\begin{equation}\label{pp}
\begin{cases}
\ds\frac{\partial u(x,t)}{\partial t} =d\Delta u(x,t)+r
u(x,t)\left(1-u(x,t-\tau)\right),&x\in\Om,\ t>0,\\
u(x,t)=0,&  x\in\partial\Om,\ t>0,\\
\end{cases}
\end{equation}
time delay $\tau$ can induce Hopf bifurcation, see also \cite{HuR,su2009hopf,Su2011,yan2010stability,yan2012} for some more general population models.
we also refer to \cite{Chen2012,Chen2016,Guo2015,Guo2017,Guo2016} for the Hopf bifurcation of models with the nonlocal delay effect and homogenous Dirichlet boundary conditions.
A natural question is that whether delay can induce instability for reaction-diffusion-advection models. For model \eqref{taxis}, considering the delay effect, Chen et al. \cite{Chen2017} studied the following model
\begin{equation}\label{taxisde}
\begin{cases}
\ds\frac{\partial u}{\partial t} =\nabla\cdot[d\nabla u-au\nabla m]+
u\left(m(x)-u(x,t-\tau)\right),&x\in\Om,\ t>0,\\
u(x,t)=0,&  x\in\partial\Om,\ t>0,\\
\end{cases}
\end{equation}
and showed that Hopf bifurcation is more likely to occur when the advection rate increases.

In this paper, we mainly concern whether delay can induce Hopf bifurcation for model \eqref{flowR}, and for simplicity we only consider the case of $b=0$. Actually, we investigate the
following model for a single species in the advective heterogeneous enviroment
\begin{equation}\label{delay1}
\begin{cases}
u_t =du_{xx}-\alpha u_x+
u\left(m(x)-\int_0^LK(x,y)u(y,t-\tau)dy\right),&0<x<L,\ t>0,\\
du_x(0,t)-\alpha u(0,t)=0,\;du_x(L,t)-\alpha u(L,t)=0,& t>0,\\
\end{cases}
\end{equation}
where parameters $d$, $\alpha$ and $L$ have the same meanings as that in model \eqref{flowR}, delay $\tau$ represents the maturation time, and intrinsic growth rate $m(x)$ is spatially dependent and show the effect of the heterogenous environment.
Here $K(x,y)$ accounts for the nonlocality of the species. We remark that this kind of nonlocal effect is not induced by the time delay, and it represents
the nonlocal interspecific
competition of the species for resources. The individuals at different
locations may compete for common resource or communicate either visually or by
chemical means, see \cite{britton1990spatial,Furter} for the detailed biological explaination.
Throughout the paper, unless otherwise specified, we assume that $m(x)$ satisfies:
\begin{enumerate}
\item[$\mathbf{(A_1)}$] $m(x)\in C^2[0,L]$, and $m(x)\ge(\not\equiv)0$,
\end{enumerate}
and the following assumption is imposed on the kernel function $K(x,y)$:
\begin{enumerate}
\item[$(\mathbf{A}_2)$] either
$$
K(x,y)=\delta(x-y),
$$
or
$$
~K(x,y)\in L^\infty((0,L)\times (0,L)),
$$
where $L_+:=\{(x,y)\in(0,L)\times (0,L):K(x,y)>0\}$ has positive Lebesgue measure.
\end{enumerate}
For example, the following kernel function
\begin{equation}\label{K}
K(x,y)=\begin{cases}
0,\;\;y>x\\
1,\;\;0<y\le x\\
\end{cases}
\end{equation}
satisfies assumption $(\mathbf{A}_2)$, and was used to model the nonlocal competition of the phytoplankton for light \cite{DuHsu2010,Huisman}. Moreover, if $K(x,y)=\delta(x-y)$, then
$$
\int_0^LK(x,y)u(y,t-\tau)dy=u(x,t-\tau),
$$
and there is no nonlocal effect.

For the case that advection $\alpha=0$ and $K(x,y)=\delta(x-y)$, Shi et al. \cite{shishis} showed that delay can induced Hopf bifurcation for model \eqref{delay1}. Our main results also extend the results of \cite{busenberg1996stability,shishis}, and show that
Hopf bifurcation can also occur at the nonconstant positive
steady state when $\alpha\ne0$. Moreover, we will show that if $m(x)$ is spatially dependent, then the spatial scale and advection can affect Hopf bifurcation. For example, Hopf bifurcation can be more likely to occur when the advection rate increases or decreases for different types of $m(x)$. This phenomenon is different from that in
model \eqref{taxisde}, where Hopf bifurcation is more likely to occur when the advection rate increases. We point out that, since the boundary condition is different, the method and arguments in \cite{Chen2012} should be modified to investigate this model.

Letting $\tilde u=e^{(-\alpha /d)x}u$, $\tilde t=d t$, denoting $\tilde r=1/d$, $\tilde \alpha=a/d$, $\tilde\tau=d\tau$,  and dropping the tilde sign, model \eqref{delay1} can be transformed as the following equivalent model:
\begin{equation}\label{delay}
\begin{cases}
u_t=e^{-\alpha x}\left(e^{\alpha x} u_x\right)_x+
r u\left(m(x)-\int_0^L K(x,y)e^{\alpha y}u(y,t-\tau)dy\right),\;\;x\in(0,L),\ t>0,\\
u_x(0,t)=u_x(L,t)=0,\;\;t>0.\\
\end{cases}
\end{equation}
The initial value of model \eqref{delay} is
\begin{equation}\label{initial1}u(x,s)=\eta(x,s)\ge0,\;\;\;\;
x\in(0,L),\ t\in[-\tau,0],\end{equation}where $\eta\in
C:=C([-\tau,0],Y)$ and $Y=L^2(0,L)$. Note that $e^{-\alpha x}\f{\partial }{\partial x}\left(e^{\alpha x} \f{\partial }{\partial x}\right)$ generates an analytic
semigroup $T(t)$ on $Y$ with the domain
\begin{equation}
\mathscr{D}\left(e^{-\alpha x}\f{\partial }{\partial x}\left(e^{\alpha x} \f{\partial }{\partial x}\right)\right)=
\{\psi\in H^2(0,L):\psi_x(0)=\psi_x(L)=0\}.
\end{equation}
Define $F:C\rightarrow Y$ by
\begin{equation}
F(\Psi)(x)=r
\Psi(0)\left(m(x)-\int_0^LK(x,y)e^{\alpha y}\Psi(-\tau)(y)dy\right).
\end{equation}
An easy calculation implies that $F$ is locally Lipschitz continuous. Therefore, it follows from
\cite{wu1996theory} that, for each $\Psi\in C$, there exists a
maximum $t_{\Psi}>0$ such that model \eqref{delay}
has a unique solution $u_{\Psi}(t)$ existing on $[-\tau,t_{\Psi})$.
The following eigenvalue problem is crucial for our further investigation
\begin{equation}\label{eigen-1}
\begin{cases}
-e^{-\alpha x}\left(e^{\alpha x}\phi_x\right)_x=\la m(x) \phi(x),\;\;x\in(0,L),\\
\phi_x(0)=\phi_x(L)=0.
\end{cases}
\end{equation}
Denote by $\la_1$ the principal eigenvalue of problem \eqref{eigen-1}, and let $\phi$ be the corresponding eigenfunction with respect to $\la_1$ such
that $\phi(x)>0$. It follows from \cite{LouY2015} that
\begin{equation}\label{la1}
\la_1=\inf_{0\ne\psi \in W^{1,2}}\ds\f{\int_0^L e^{\alpha x}\psi_x^2dx}{\int_0^Le^{\alpha x}\psi^2dx}=0,
\end{equation}
$\phi$ is a constant, and we choose $\phi=1$ for simplicity.

For simplicity of the notations, as in \cite{Chen2012},
we also denote the spaces $$X=\{\psi\in H^2(0,L):\psi_x(0)=\psi_x(L)=0\},$$
$Y=L^2(0,L)$, $C=C([-\tau,0],Y)$, and $\mathcal{C}=C([-1,0],Y)$ throughout the paper.
Let the complexification of a linear space $Z$ be $Z_\mathbb{C}:= Z\oplus
iZ=\{x_1+ix_2|~x_1,x_2\in Z\}$, and define the domain
of a linear operator $T$ by $\mathscr{D}(T)$, the kernel of $T$ by $\mathscr{N}(T)$, and the range of $T$ by $\mathscr{R}(T)$.
Moreover, for Hilbert space $Y_{\mathbb C}$,
the standard inner product is $\langle u,v \rangle=\ds\int_{0}^L \overline
u(x) {v}(x) dx$.
The rest of the paper is organized as follows. In Section 2, we
show that a nonconstant positive steady state bifurcates from the trivial equilibrium. The Hopf bifurcation near this nonconstant positive steady state is also investigated.
In Section 3, we obtain
the direction of the Hopf bifurcation and the stability of the bifurcating periodic orbits. In Section 4, the effect of spatial heterogeneity are obtained, and the spatial scale and advection can affect Hopf bifurcation in the heterogenous environment.
Moreover, some numerical simulations are given to illustrate our theoretical results. Especially, Eq. \eqref{delay1} can model the population dynamics for a species in a water column with nonlocal competition for light.
We numerically show that when advection rate $\alpha=0$, the density of the species concentrates on the top of the water column. However when $\alpha$ is large, the density of the species concentrates on the bottom of the water column.

\section {Stability and Hopf bifurcation}
\subsection{Positive steady states and eigenvalue problem}
Firstly, we show the existence of positive steady states of Eq.
\eqref{delay}, which satisfy
\begin{equation} \label{steady}
\begin{cases}
\left(e^{\alpha x}u_x\right)_x+r e^{\alpha x}u\left(m(x)-\int_0^LK(x,y)e^{\alpha y}u(y)dy\right)=0, &
x\in(0,L),\\
u_x(0)=u_x(L)=0.
\end{cases}
\end{equation}
Denote
\begin{equation}\label{LL}
P_0:=\f{\partial }{\partial x}\left(e^{\alpha x} \f{\partial }{\partial x}\right).
\end{equation}
Then
\begin{equation*}
      X  =  \mathscr{N}\left(P_0\right)\oplus X_1,\;\;
      Y  = \mathscr{N}\left(P_0\right)\oplus Y_1,
\end{equation*}
 where
\begin{equation}\label{L}
\begin{split}
\mathscr{N}\left(P_0\right)=&\text{span}\{\phi\}=\text{span}\{1\},\;\;
X_1=\left\{y\in X:\int_{0}^L y(x)dx=0\right\},\\
Y_1=&\mathscr{R}\left(P_0\right)=\left\{y\in Y:\int_{0}^L
y(x)dx=0\right\}.
\end{split}
\end{equation}
By the arguments similar to Theorem A.2. of \cite{Cantrell1996}, we obtain the existence of positive steady states in the following.
\begin{theorem}\label{Tsteady}
There exist $r_1>0$ and a continuously differentiable mapping $r\mapsto u_r$ from $[0,r_1]$ to $X$ such that $u_r$ is a positive solution of Eq. \eqref{steady} for $r\in(0,r_1]$, and $u_0=c_0$, where
\begin{equation}\label{ovec}
c_0=\ds\f{\int_0^L m(x) e^{\alpha x}dx}{\int_0^L\int_0^L K(x,y) e^{\alpha x+\alpha y}dxdy}>0.
\end{equation}
\end{theorem}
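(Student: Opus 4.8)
The plan is to resolve \eqref{steady} by a Lyapunov--Schmidt reduction, because the implicit function theorem cannot be applied directly at $r=0$: there the equation degenerates to $P_0u=0$, whose solution set is the entire line $\mathscr{N}(P_0)=\mathrm{span}\{1\}$ of constants, so the linearization in $u$ is not invertible. Write $G(r,u)=(e^{\alpha x}u_x)_x+re^{\alpha x}u\bigl(m(x)-\int_0^LK(x,y)e^{\alpha y}u(y)\,dy\bigr)$, a smooth (indeed analytic) map from $\R\times X$ into $Y$, and use the splittings $X=\mathscr{N}(P_0)\oplus X_1$ and $Y=\mathscr{N}(P_0)\oplus Y_1$ from \eqref{L}, together with the projection $Q\colon Y\to Y_1$ along the constants given by $Qy=y-\frac1L\int_0^Ly\,dx$.

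I would then set $u=c+w$ with $c\in\R$ and $w\in X_1$, and split $G(r,u)=0$ into the range equation $QG(r,c+w)=0$ and the complementary average equation $(I-Q)G(r,c+w)=0$. For the range equation, observe that its $w$-derivative at $(r,c,w)=(0,c,0)$ is $QP_0=P_0\colon X_1\to Y_1$, which is an isomorphism since $\mathscr{R}(P_0)=Y_1$ and $\mathscr{N}(P_0)\cap X_1=\{0\}$; hence the implicit function theorem yields a $C^1$ map $w=w(r,c)$ solving $QG\equiv0$ near $(0,c_0)$, and since $w=0$ already solves it at $r=0$ we get $w(0,c)=0$.

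The heart of the matter is the bifurcation equation. Because $\int_0^L(e^{\alpha x}u_x)_x\,dx=0$ by the Neumann condition, the average equation is exactly $r\,g(r,c)=0$, where $g(r,c)=\int_0^Le^{\alpha x}u\bigl(m(x)-\int_0^LK(x,y)e^{\alpha y}u(y)\,dy\bigr)\,dx$ with $u=c+w(r,c)$; this $g$ is $C^1$. For $r>0$ the equation reads $g(r,c)=0$, and at $r=0$, using $w(0,c)=0$, one finds $g(0,c)=c\int_0^Le^{\alpha x}m\,dx-c^2\int_0^L\int_0^LK e^{\alpha x+\alpha y}\,dx\,dy$, whose nonzero root is precisely the constant $c_0$ of \eqref{ovec}. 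Applying the implicit function theorem to $g$ at $(0,c_0)$ requires $\partial_cg(0,c_0)\neq0$; a direct computation gives $\partial_cg(0,c_0)=-\int_0^Le^{\alpha x}m(x)\,dx$, which is strictly negative by $(\mathbf{A_1})$. This produces a $C^1$ branch $c=c(r)$ with $c(0)=c_0$, and setting $u_r:=c(r)+w(r,c(r))$ gives the required continuously differentiable family solving \eqref{steady} for small $r$, with $u_0=c_0$.

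For positivity, since $u_r\to c_0>0$ in $X\hookrightarrow C[0,L]$ as $r\to0$ (Sobolev embedding in one dimension), continuity forces $u_r>0$ once $r$ is sufficiently small, so after shrinking $r_1$ the family is positive on $(0,r_1]$. The only genuinely delicate step is the first one: recognizing that the degeneracy at $r=0$ must be handled by Lyapunov--Schmidt, and that factoring the $r$ out of the average equation leaves a scalar bifurcation equation that transversally selects $c_0$; the remaining steps are routine uses of the implicit function theorem and the Sobolev embedding.
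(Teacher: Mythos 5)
Your proof is correct, and it reaches the result by a genuinely different (though closely related) route. The paper also splits $u=c+w$ with $w\in X_1$, but instead of a hands-on Lyapunov--Schmidt reduction it treats $c$ itself as the bifurcation parameter: it defines $H(c,w,r)=P_0w+re^{\alpha x}(c+w)\bigl(m(x)-\int_0^LK(x,y)e^{\alpha y}(c+w(y))dy\bigr)$, observes that $H(c,0,0)=0$ for every $c$, shows $\mathscr{N}\bigl(D_{(w,r)}H(c_0,0,0)\bigr)=\{(sv^*,s)\}$ is one-dimensional, verifies the crossed-derivative transversality condition $D_cD_{(w,r)}H(c_0,0,0)[v^*,1]\notin\mathscr{R}\bigl(D_{(w,r)}H(c_0,0,0)\bigr)$, and invokes the Crandall--Rabinowitz theorem to obtain a curve $(c(s),w(s),r(s))$ bifurcating from the trivial line at $c=c_0$; since $r'(0)=1$, that curve is then reparametrized by $r$. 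You instead keep $r$ as the parameter, solve the range equation for $w(r,c)$ by one application of the implicit function theorem, factor $r$ out of the averaged equation, and close with a scalar implicit function theorem for $g(r,c)=0$. The two arguments pivot on exactly the same nondegeneracy: your $\partial_cg(0,c_0)=-\int_0^Lm(x)e^{\alpha x}dx<0$ is precisely the quantity whose nonvanishing produces the contradiction in the paper's check that the crossed derivative misses $\mathscr{R}(P_0)$. Your version buys self-containedness (two uses of the implicit function theorem rather than citing the Crandall--Rabinowitz theorem) and a direct parametrization by $r$, with no inversion of $r(s)$; the paper's version buys a complete description of \emph{all} solutions of $H=0$ near $(c_0,0,0)$ (both curves), at the cost of the somewhat unusual role reversal in which $(w,r)$ are the unknowns. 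One point worth stating explicitly in your write-up: the formula $\partial_cg(0,c_0)=-\int_0^Lme^{\alpha x}dx$ tacitly uses $\partial_cw(0,c_0)=0$, which holds because $w(0,c)\equiv0$ \emph{identically in} $c$ --- a consequence of the uniqueness part of the implicit function theorem together with the injectivity of $P_0$ on $X_1$.
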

\begin{proof}
It follows from assumption $(\mathbf{A})$ that $c_0>0$.
Define $H: \mathbb R\times X_1\times \mathbb R\to Y$ by
$$H(c,w,r)=P_0w+re^{\alpha x}(c+w)\left(m(x)-\int_0^L K(x,y)e^{\alpha y}(c+w(y))dy\right).$$
Letting
\begin{equation}\label{spe}
u= c+w,\;\; c\in\mathbb R,\;w\in X_1,
 \end{equation}
and substituting it
into Eq. \eqref{steady}, we see that $(u,r)$ solves Eq. \eqref{steady}, where $u\in X$, $r>0$,
if and only if $H(c,w,r)=0$
is solvable for some value of $c\in \mathbb R$, $w\in X_1$ and $r>0$.
Note that $H(c,0,0)=0$ for any $c\in \mathbb R$. An easy calculation implies that
\begin{equation*}
\begin{split}
D_{(w,r)}H(c,w,r)[v,\sigma]=&P_0v+rm(x)e^{\alpha x} v-re^{\alpha x}(c+w)\int_0^LK(x,y)e^{\alpha y}v(y)dy\\
-&re^{\alpha x}v\int_0^L K(x,y)e^{\alpha y}(c+w(y))dy\\
+&\sigma e^{\alpha x}(c+w)\left(m(x)-\int_0^LK(x,y)e^{\alpha y}(c+w(y))dy\right).
\end{split}
\end{equation*}
Here
$D_{(w,r)}H(c,w,r)$
is the Fr\'echet derivative of $H(c,w,r)$ with respect to $(w,r)$.
Then,
$$D_{(w,r)}H(c,0,0)[v,\sigma]=P_0 v+\sigma ce^{\alpha x}\left(m(x)-c\int_0^L K(x,y) e^{\alpha y}dy\right).$$
Since
$$-c_0e^{\alpha x}\left(m(x)-c_0\int_0^L K(x,y) e^{\alpha y}dy\right)\in Y_1=\mathscr{R}\left(P_0\right),$$
there exists a unique $v^*\in X_1$ such that
$$P_0v^*=-c_0e^{\alpha x}\left(m(x)-c_0\int_0^L K(x,y) e^{\alpha y}dy\right),$$
and consequently,
$$\mathscr{N}( D_{(w,r)}H(c_0,0,0))=\{(sv^*,s):s\in\mathbb R\}.$$
A direct computation yields
$$D_cD_{(w,r)}H(c_0,0,0)[v^*,1]=m(x)e^{\alpha x}-2c_0 e^{\alpha x}\int_0^L K(x,y)e^{\alpha y}dy,$$
where $D_cD_{(w,r)}H(c_0,0,0)$ is the Fr\'echet derivative of $D_{(w,r)}H(c,w,r)$ with respect to $c$ at $(c_0,0,0)$.
We claim that
\begin{equation}\label{cl1}D_cD_{(w,r)}H(c_0,0,0)[v^*,1]\not\in \mathscr{R}\left(D_{(w,r)}H(c_0,0,0)\right).
\end{equation}
 Suppose it is not true. Then, there exists $(\tilde v,\tilde \sigma)$ such that
\begin{equation}
\begin{split}
D_{(w,r)}H(c_0,0,0)[\tilde v,\tilde \sigma]=&P_0 \tilde v+\tilde \sigma c_0e^{\alpha x}\left(m(x)-c_0\int_0^L K(x,y) e^{\alpha y}dy\right)\\
=&m(x)e^{\alpha x}-2c_0 e^{\alpha x}\int_0^L K(x,y)e^{\alpha y}dy,
\end{split}
\end{equation}
which implies that
$$m(x)e^{\alpha x}-2c_0 e^{\alpha x}\int_0^L K(x,y)e^{\alpha y}dy\in \mathscr{R}(P_0).$$
This contradicts with the fact that
$$\int_0^Lm(x)e^{\alpha x}dx-2c_0\int_0^L\int_0^LK(x,y)e^{\alpha x+\alpha y}dxdy=-\int_0^L m(x)e^{\alpha x}dx\ne0.$$
Therefore, Eq. \eqref{cl1} holds, and it follows from the Crandall-Rabinowitz bifurcation theorem
\cite{Crandall} that the solutions of $H(c,w,r)=0$ near $(c_0,0,0)$ consist precisely by the curves
$\{(c,0,0):c\in \mathbb {R}\}$ and $$\{(c(s),w(s),r(s)): s\in(-\delta,\delta)\},$$ where $(c(s),w(s),r(s))$ are continuously differentiable, $c(0)=c_0$, $w(0)=0$, $r(0)=0$, $w'(0)=v^*$, and $r'(0)=1$.
Since $r'(0)=1>0$, $r(s)$ has a inverse function $s(r)$ for small $s$. Noticing that $c_0>0$, we see that there exists $r_1>0$ such that Eq. \eqref{steady} has a positive solution $u_r=c(s(r))+w(s(r))$ for $r\in(0,r_1]$. Moreover, $$u_0=c(s(0))+w(s(0))=c(0)+w(0)=c_0.$$ This completes the proof.
\end{proof}

\begin{remark}
It follows from the imbedding theorem that $u_r\in C^{1+\delta}([0,L])$ for some $\delta\in(0,1)$, and
$\lim_{r\to 0} u_r=c_0$ in $C^{1+\delta}([0,L])$.
\end{remark}
Then, we obtain the eigenvalue problem associated with $u_r$. The Linearized equation of \eqref{delay} at $u_r$ takes the following form
\begin{equation}
\label{linear}\begin{cases}\ds\frac{\partial v}{\partial t} =
e^{-\alpha x}P_0v+
r
\left(m(x)-\int_0^L K(x,y) e^{\alpha y}u_r(y)dy\right)v\\~~~~-r u_r\int_0^L K(x,y)e^{\alpha y}
v(y,t-\tau)dy,& x\in(0,L),\; t>0,\\
v_x(x,t)=0,&x=0,L,\; t>0.
\end{cases}
\end{equation}
Denote
\begin{equation}\label{Ar}
\Tilde K(r):= m(x)-\int_0^L K(x,y) e^{\alpha y}u_{r}(y)dy.
\end{equation}
From \cite{wu1996theory}, we see that the solution semigroup of
Eq. \eqref{linear} has the infinitesimal generator $A_\tau(r)$ defined by
\begin{equation}\label{Ataula}A_\tau(r) \Psi=\dot\Psi\end{equation}
with the domain
\begin{equation*}\begin{split}
 &\mathscr{D}(A_\tau(r)) = \big\{\Psi\in C_\mathbb{C}
\cap C^1_\mathbb{C}:\ \Psi(0)\in X_{\mathbb{C}},\dot\Psi(0)=e^{-\alpha x}P_0\Psi(0)+r\Tilde K(r)\Psi(0)\\
&~~~~~~~~~~~~~-r u_r\int_0^L K(x,y)e^{\alpha y}
\Psi(-\tau)(y)dy  \big\},
\end{split}\end{equation*}
where $
C^1_\mathbb{C}=C^1([-\tau,0],Y_\mathbb{C})$, $P_0$ and $\Tilde K(r)$ are defined as in Eqs. \eqref{LL} and \eqref{Ar} respectively. Moreover, $\mu\in\mathbb{C}$ is an eigenvalue of $A_\tau(r)$, if and only if there exists $\psi(\ne0)\in
X_{\mathbb{C}}$ such that $\Delta(r,\mu,\tau)\psi=0$,
where
\begin{equation}\label{triangle}
\begin{split}
&\Delta(r,\mu,\tau)\psi
:=e^{-\alpha x}P_0\psi+r \Tilde K(r)\psi
-r u_r\int_0^L K(x,y)e^{\alpha y}
\psi(y)dy e^{-\mu\tau}-\mu\psi.
\end{split}
\end{equation}
Then $A_\tau(r)$ has a purely imaginary eigenvalue $\mu=i\nu\
(\nu>0)$ for some $\tau\ge0$, if and only if
\begin{equation}\label{eigen}
\begin{split}
&P_0\psi+r e^{\alpha x}\Tilde K(r)\psi
-r u_re^{\alpha x}\int_0^L K(x,y)e^{\alpha y}
\psi(y)dy e^{-i\theta}-i\nu e^{\alpha x}\psi =0
\end{split}
\end{equation}
is solvable for some value of $\nu>0$, $\theta\in[0,2\pi)$, and $\psi(\ne 0)\in X_{\mathbb{C}}$.
The estimates for solutions of Eq. \eqref{triangle} can be derived as follows.
\begin{lemma}\label{nu}
Assume that $(\mu_r,\tau_r,\psi_r)$ solves $\Delta(r,\mu,\tau)\psi=0$ with ${\mathcal Re}\mu_r,\tau_{r}\ge0$ and $0\ne \psi_r
\in X_{\mathbb{C}}$.  Then
$\left|\ds
\frac{\mu_r}{r}\right|$ is bounded for
$r\in(0,r_1]$.\end{lemma}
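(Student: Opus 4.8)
The plan is to test the eigenvalue equation $\Delta(r,\mu_r,\tau_r)\psi_r=0$ from \eqref{triangle} against $\psi_r$ in the \emph{weighted} inner product $\langle u,v\rangle_\alpha:=\int_0^L e^{\alpha x}\overline u\,v\,dx$ associated with the advection rate, which is precisely the pairing that makes $P_0=\partial_x(e^{\alpha x}\partial_x)$ symmetric with a sign-definite quadratic form. Concretely, I would multiply the equation by $e^{\alpha x}\overline{\psi_r}$ and integrate over $(0,L)$. Since $e^{\alpha x}\cdot e^{-\alpha x}P_0\psi_r=P_0\psi_r$, the leading term is $\int_0^L\overline{\psi_r}\,\partial_x(e^{\alpha x}\partial_x\psi_r)\,dx$; integrating by parts and invoking $\psi_{r,x}(0)=\psi_{r,x}(L)=0$ removes the boundary contribution and produces the real, nonpositive quantity $-\int_0^L e^{\alpha x}|\psi_{r,x}|^2dx$. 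After normalizing $\|\psi_r\|_\alpha^2=\int_0^Le^{\alpha x}|\psi_r|^2dx=1$, this yields
\begin{equation*}
\mu_r=-\int_0^L e^{\alpha x}|\psi_{r,x}|^2dx+r\int_0^L e^{\alpha x}\Tilde K(r)|\psi_r|^2dx-r\,e^{-\mu_r\tau_r}\int_0^L e^{\alpha x}u_r\overline{\psi_r}\,g_r(x)\,dx,
\end{equation*}
where $g_r(x):=\int_0^L K(x,y)e^{\alpha y}\psi_r(y)\,dy$.

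Dividing by $r$ and separating real and imaginary parts is the heart of the argument. Because $m$, $u_r$ and hence $\Tilde K(r)$ are real-valued, the first two integrals are real, so the entire imaginary part of $\mu_r/r$ comes from the last term. The decisive use of the hypotheses ${\mathcal Re}\,\mu_r\ge0$ and $\tau_r\ge0$ is the bound $|e^{-\mu_r\tau_r}|=e^{-\tau_r{\mathcal Re}\,\mu_r}\le1$, which gives
\begin{equation*}
\left|\frac{{\mathcal Im}\,\mu_r}{r}\right|\le\left|\int_0^L e^{\alpha x}u_r\overline{\psi_r}\,g_r(x)\,dx\right|.
\end{equation*}
It then remains only to bound this nonlocal integral uniformly in $r$.

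That uniform bound is the main technical step. By Theorem \ref{Tsteady} and the following Remark, $u_r\to c_0$ in $C^{1+\delta}([0,L])$, so $\|u_r\|_\infty$ and $\|\Tilde K(r)\|_\infty$ are bounded uniformly for $r\in(0,r_1]$. I would treat the two alternatives of $(\mathbf{A}_2)$ separately: if $K=\delta(x-y)$ then $g_r=e^{\alpha x}\psi_r$ and the integral equals $\int_0^L e^{2\alpha x}u_r|\psi_r|^2dx$, bounded by $e^{|\alpha|L}\|u_r\|_\infty$; if $K\in L^\infty$, two applications of Cauchy--Schwarz in $\langle\cdot,\cdot\rangle_\alpha$ give first $|g_r(x)|\le C\|\psi_r\|_\alpha$ pointwise and then $|\int_0^L e^{\alpha x}u_r\overline{\psi_r}g_r\,dx|\le C'\|u_r\|_\infty$. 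In both cases the normalized term is bounded by a constant independent of $r$.

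Combining the estimates closes the proof. The imaginary part is controlled immediately by the display above. For the real part, ${\mathcal Re}\,\mu_r/r\ge0$ by hypothesis, while discarding the nonpositive elliptic term gives the reverse inequality ${\mathcal Re}\,\mu_r/r\le\|\Tilde K(r)\|_\infty+C'\|u_r\|_\infty$, so $0\le{\mathcal Re}\,\mu_r/r\le\text{const}$. Hence $|\mu_r/r|$ is bounded uniformly in $r\in(0,r_1]$. I expect the only genuine obstacle to be the uniform-in-$r$ control of $g_r$ in the weighted norm and the need to unify the measure-valued and $L^\infty$ kernel cases; the rest is sign-bookkeeping driven entirely by ${\mathcal Re}\,\mu_r,\tau_r\ge0$ and the negativity of the elliptic form.
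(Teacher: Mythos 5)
Your proof is correct and is essentially the paper's own argument: both pair $\Delta(r,\mu_r,\tau_r)\psi_r=0$ with $e^{\alpha x}\overline{\psi_r}$ (the weighted pairing), exploit ${\mathcal Re}\,\mu_r,\tau_r\ge0$ only through $|e^{-\mu_r\tau_r}|\le1$, bound the nonlocal term uniformly via $\|u_r\|_\infty$, $\|K\|_\infty$ and Cauchy--Schwarz in the weighted norm, and conclude by separating real and imaginary parts. The single minor divergence is that the paper disposes of the local terms in one stroke by observing that $u_r$ is the positive principal eigenfunction of $P_0+re^{\alpha x}\tilde K(r)$ with principal eigenvalue $0$, so that $\langle\psi,P_0\psi+re^{\alpha x}\tilde K(r)\psi\rangle\le0$ for every $\psi\in X_{\mathbb C}$, whereas you integrate by parts to discard only $-\int_0^Le^{\alpha x}|\psi_{r,x}|^2dx$ and then absorb the remaining term through the uniform bound on $\|\tilde K(r)\|_\infty$; both are valid (yours is marginally more elementary, and your explicit treatment of the case $K(x,y)=\delta(x-y)$ covers a case the paper's $\|K\|_\infty$ estimate silently omits).
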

\begin{proof}
Noticing that $u_{r}$ is the principal eigenfunction of $P_0+re^{\alpha x}\Tilde K(r)$ with principal eigenvalue $0$, we have $\langle \psi, P_0\psi+re^{\alpha x}\Tilde K(r)\psi\rangle\le0$ for any $\psi\in X_{\mathbb C}$.
Substituting $(\mu_r,\tau_r,\psi_r)$ into $\Delta(r,\mu,\tau)\psi=0$, multiplying it by $e^{\alpha x}\overline\psi_r$, and integrating the result over $(0,L)$, we have
\begin{equation}\label{inqem2}
\begin{split}
&\langle \psi_{r}, P_0\psi_{r}+re^{\alpha x}\Tilde K(r)\psi_{r}\rangle\\
=&r \int_0^L\int_0^L K(x,y)e^{\alpha x+\alpha y}u_r(x)\overline{\psi_{r}}(x)\psi_{r}(y)dxdye^{-\mu_r\tau_r}+\mu_{r}\int_0^L e^{\alpha x}|\psi_{r}|^2dx.
\end{split}
\end{equation}
Since ${\mathcal Re}\mu_r,\tau_{r}\ge0$, we see that
\begin{equation*}
\begin{split}
0\le{\mathcal Re}(\mu_{r}/r)\le&\ds\f{1}{\int_0^L e^{\alpha x} |\psi_{r}|^2dx}{\mathcal Re}\left[-\int_0^L\int_0^L K(x,y)e^{\alpha x+\alpha y}u_r(x)\overline{\psi_{r}}(x)\psi_{r}(y)dxdye^{-\mu_r\tau_r}\right]\\
   \le&e^{2\alpha L}L\|u_r\|_\infty\|K(x,y)\|_\infty,
\end{split}
\end{equation*}
and
\begin{equation*}
\begin{split}
|\mathcal{I}m
(\mu_{r}/r)|=&\ds\f{1}{\int_0^L e^{\alpha x}|\psi_{r}|^2dx}\left|\mathcal{I}m\left[ \int_0^L\int_0^L K(x,y)e^{\alpha x+\alpha y}u_r(x)\overline{\psi_{r}}(x)\psi_{r}(y)dxdye^{-\mu_r\tau_r}\right]\right|\\
   \le&e^{2\alpha L}L\|u_r\|_\infty\|K(x,y)\|_\infty.
\end{split}
\end{equation*}
It follows from the continuity of
$r\mapsto \|u_r\|_\infty$ that $\ds
\left|\frac{\mu_r}{r}\right|$ is bounded for $r\in(0,r_1]$.
\end{proof}
The following result is similar to Lemma 2.3 of \cite{busenberg1996stability} and
we omit the proof here.
\begin{lemma}\label{lem21}
Assume that $z\in (X_1)_{\mathbb C}$. Then $|
\langle P_0z,z\rangle|\geq
\la_2\|z\|^2_{Y_{\mathbb{C}}}$, where $\la_2$ is the second
eigenvalue of operator $-P_0$.
\end{lemma}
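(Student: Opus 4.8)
The plan is to exploit the self-adjointness of $P_0$ and reduce the claimed bound to the variational (min--max) characterization of the second eigenvalue $\lambda_2$ of $-P_0$. First I would verify that $P_0$ is self-adjoint with respect to the \emph{standard} inner product $\langle\cdot,\cdot\rangle$ on the domain $X_{\mathbb C}$: integrating $\langle P_0 u, v\rangle=\int_0^L \overline{(e^{\alpha x}u_x)_x}\,v\,dx$ by parts once, the boundary term $[\,\overline{e^{\alpha x}u_x}\,v\,]_0^L$ vanishes because $u_x(0)=u_x(L)=0$, leaving $-\int_0^L e^{\alpha x}\overline{u_x}\,v_x\,dx$, which is symmetric in $u$ and $v$. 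Taking $v=z$ gives
\begin{equation*}
\langle P_0 z, z\rangle = -\int_0^L e^{\alpha x}|z_x|^2\,dx \le 0,
\end{equation*}
so the quantity is real and nonpositive, whence $|\langle P_0 z, z\rangle| = \langle -P_0 z, z\rangle$. This removes the absolute value and turns the estimate into a one-sided lower bound for a nonnegative quadratic form.

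Next I would use that $-P_0$, being a regular Sturm--Liouville operator $-(e^{\alpha x}u_x)_x$ with Neumann boundary conditions, has a discrete real spectrum $0=\lambda_1<\lambda_2\le\lambda_3\le\cdots\to\infty$ with eigenfunctions $\{\phi_n\}$ forming an orthonormal basis of $Y_{\mathbb C}$, where $\lambda_1=0$ corresponds to the constant eigenfunction $\phi=1$. The key observation is that the constraint defining $X_1$, namely $\int_0^L z\,dx=\langle 1, z\rangle=0$, says exactly that $z$ is orthogonal to the first eigenfunction. Hence for $z\in (X_1)_{\mathbb C}$ the spectral expansion has no $\phi_1$-component, $z=\sum_{n\ge 2}c_n\phi_n$, and
\begin{equation*}
\langle -P_0 z, z\rangle = \sum_{n\ge 2}\lambda_n|c_n|^2 \ge \lambda_2\sum_{n\ge 2}|c_n|^2 = \lambda_2\|z\|_{Y_{\mathbb C}}^2,
\end{equation*}
which is the desired inequality. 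Equivalently, one may invoke the Courant--Fischer characterization $\lambda_2=\inf\{\langle -P_0 z, z\rangle/\|z\|_{Y_{\mathbb C}}^2 : 0\ne z\perp\phi_1\}$ directly.

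The only point requiring care --- and the step I expect to be the main (if mild) obstacle --- is the self-adjointness of $P_0$ in the \emph{unweighted} $L^2$ inner product. Since the paper emphasizes elsewhere the weighted pairing $\int_0^L e^{\alpha x}(\cdot)(\cdot)\,dx$ associated with the advection, one must check that the operator written as $P_0=\partial_x(e^{\alpha x}\partial_x)$ is nonetheless symmetric for the \emph{standard} inner product; this succeeds precisely because $e^{\alpha x}$ sits inside the derivative (Sturm--Liouville form) and the Neumann conditions annihilate the boundary contributions. Once this is settled, everything reduces to standard spectral theory, and the complexification is harmless: the quadratic form $\int_0^L e^{\alpha x}|z_x|^2\,dx$ and the expansion coefficients behave for $z\in (X_1)_{\mathbb C}$ exactly as in the real case. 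This mirrors Lemma 2.3 of \cite{busenberg1996stability}, with $e^{\alpha x}$ playing the role of the constant weight there.
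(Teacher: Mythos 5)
Your proof is correct, and it is essentially the argument the paper intends: the paper omits the proof of this lemma, referring to Lemma 2.3 of \cite{busenberg1996stability}, and that argument is exactly your route --- symmetry of $P_0$ under the unweighted inner product (the Sturm--Liouville form plus Neumann conditions kill the boundary terms), nonpositivity of the form $\langle P_0z,z\rangle=-\int_0^L e^{\alpha x}|z_x|^2\,dx$, and the observation that $\int_0^L z\,dx=0$ means orthogonality to the constant first eigenfunction, so the spectral expansion (or Courant--Fischer) gives $\langle -P_0z,z\rangle\ge\la_2\|z\|^2_{Y_{\mathbb C}}$. Your attention to the fact that the \emph{standard} (not the weighted) inner product is the right one here is exactly the point that makes the adaptation from the constant-coefficient case of \cite{busenberg1996stability} go through.
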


For $r\in(0,r_1]$, ignoring a scalar factor, $\psi$ in Eq. \eqref{eigen} can be represented as
\begin{equation}
\label{eigen2}
\begin{split}
&\psi= \beta c_0+r z,\;\;\; z\in (X_1)_{\mathbb{C}},\; \; \beta\geq0, \\
 &\|\psi\|^2_{Y_{\mathbb{C}}}=\beta^2 c_0^2L
 +r^2\|z\|^2_{Y_{\mathbb{C}}}= c_0^2L,
 \end{split}
 \end{equation}
 where $c_0$ is defined as in Eq. \eqref{ovec}.
Then, substituting the first Equation of \eqref{eigen2} and $\nu=rh$
into Eq. \eqref{eigen}, we obtain that $(\nu,\theta,\psi)$ solves Eq. \eqref{eigen}, where $\nu>0$, $\theta\in[0,2\pi)$ and $\psi\in X_{\mathbb{C}}(\|\psi\|^2_{Y_{\mathbb{C}}}= c_0^2L)$,
if and only if the following system:
\begin{equation}\label{g1}
\begin{cases}g_1(z,\beta,h,\theta,r):=P_0z+e^{\alpha x}\Tilde K(r)(\beta c_0+rz)-ihe^{\alpha x}(\beta c_0+rz)
\\~~~~~~~~~~~~~~~~~~~-e^{\alpha x}u_r\int_0^L K(x,y)e^{\alpha y}(\beta c_0+rz(y))dy e^{-i\theta}=0\\
 g_2(z,\beta,r):=(\beta^2-1)c_0^2L+r^2\|z\|^2_{Y_{\mathbb{C}}}=0
\end{cases}
\end{equation}
has a solution $(z,\beta,h,\theta)$, where $z\in (X_1)_{\mathbb{C}}$, $\beta\ge0$, $h>0$ and $\theta\in[0,2\pi)$.
Define
$G:(X_1)_{\mathbb C}\times \mathbb{R}^4\to
Y_{\mathbb C}\times \mathbb{R}$ by $G=(g_1,g_2)$. Note that $u_0=c_0$, and we first show that
$G(z,\beta,h,\theta,r)=0$ is uniquely solvable for $r=0$.
\begin{lemma}\label{l25}
The following equation \begin{equation}\label{3.5G}
\begin{cases}
G(z,\beta,h,\theta,0)=0\\
z\in (X_1)_{\mathbb{C}},\;h\ge0,\;\beta\ge0,\; \theta\in[0,2\pi]\\
\end{cases}
\end{equation} has a unique solution $(z_{0},\beta_{0},h_{0},\theta_{0})$, where
\begin{equation}\label{lastar}
    \beta_{0}=1,\;\;\theta_{0}=\pi/2,\;\;h_{0}=\ds\f{\int_0^L m(x)e^{\alpha x}dx}{\int_0^L e^{\alpha x}dx},
\end{equation}
and $z_{0}\in(X_1)_{\mathbb C}$ is the unique solution of
\begin{equation}\label{lastari}
P_0 z=-c_0e^{\alpha x}\left(m(x)-c_0\int_0^L K(x,y)e^{\alpha y}dy\right)-ic_0^2 e^{\alpha x}\int_0^L K(x,y)e^{\alpha y}dy+ih_0c_0 e^{\alpha x}.
\end{equation}
\end{lemma}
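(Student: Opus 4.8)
The plan is to exploit the special structure of the system \eqref{g1} at $r=0$, where $u_0=c_0$ is constant, so that the scalar constraint $g_2=0$ and the operator equation $g_1=0$ partially decouple. First I would read off $\beta$: at $r=0$ the second component becomes $g_2=(\beta^2-1)c_0^2L=0$, which together with $\beta\ge0$ forces $\beta_0=1$. This fixes the normalization and leaves $g_1(z,1,h,\theta,0)=0$ to be solved for $(z,h,\theta)$. Note that at $r=0$ the equation $g_1=0$ reads
\[
P_0z+c_0e^{\alpha x}\Tilde K(0)-ihc_0e^{\alpha x}-c_0^2e^{\alpha x}\left(\int_0^L K(x,y)e^{\alpha y}dy\right)e^{-i\theta}=0,
\]
with $\Tilde K(0)=m(x)-c_0\int_0^L K(x,y)e^{\alpha y}dy$.

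The key device for pinning down $h$ and $\theta$ is the solvability (Fredholm) condition for $P_0$: since every $z\in(X_1)_{\mathbb C}$ satisfies the Neumann conditions, $\int_0^L P_0z\,dx=[e^{\alpha x}z_x]_0^L=0$, so that $\mathscr{R}(P_0)=(Y_1)_{\mathbb C}$ is exactly the zero-mean subspace recorded in \eqref{L}. So I would integrate $g_1(z,1,h,\theta,0)=0$ over $(0,L)$. The $P_0z$ term drops out, and the term carrying $\Tilde K(0)$ also vanishes, because $c_0\int_0^L e^{\alpha x}\Tilde K(0)\,dx=c_0\big(\int_0^L me^{\alpha x}dx-c_0\int_0^L\int_0^L Ke^{\alpha x+\alpha y}dxdy\big)=0$ by the very definition of $c_0$ in \eqref{ovec}. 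What survives is the scalar identity $c_0e^{-i\theta}\int_0^L\int_0^L Ke^{\alpha x+\alpha y}dxdy=-ih\int_0^L e^{\alpha x}dx$. Separating real and imaginary parts gives $\cos\theta=0$ and $h=c_0\sin\theta\,\big(\int_0^L\int_0^L Ke^{\alpha x+\alpha y}dxdy\big)\big/\int_0^L e^{\alpha x}dx$; the constraint $h\ge0$ then selects $\theta_0=\pi/2$ (rejecting $3\pi/2$, which would force $h<0$) and yields $h_0=\int_0^L me^{\alpha x}dx\big/\int_0^L e^{\alpha x}dx$, matching \eqref{lastar}.

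Finally, with $(\beta_0,\theta_0,h_0)$ in hand I would substitute $e^{-i\theta_0}=-i$ back into $g_1=0$ and rearrange to the linear equation \eqref{lastari}, $P_0z=\text{(RHS)}$. To conclude existence and uniqueness of $z_0\in(X_1)_{\mathbb C}$ it remains only to verify the compatibility condition that the right-hand side lies in $\mathscr{R}(P_0)=(Y_1)_{\mathbb C}$; integrating it over $(0,L)$ and once more using $c_0\int_0^L\int_0^L Ke^{\alpha x+\alpha y}dxdy=\int_0^L me^{\alpha x}dx$ together with the value of $h_0$ shows the mean vanishes. Since $P_0$ restricts to an isomorphism of $(X_1)_{\mathbb C}$ onto $(Y_1)_{\mathbb C}$ (injective because $\mathscr{N}(P_0)=\mathrm{span}\{1\}$ meets $X_1$ trivially, surjective by the definition of $Y_1$), the solution $z_0$ exists and is unique. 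The main obstacle—indeed the only subtle point—is recognizing that both the determination of $(h,\theta)$ and the final solvability for $z_0$ hinge on the same mean-zero (Fredholm) condition, and that the defining relation for $c_0$ is precisely what makes the relevant integrals cancel; beyond that the argument is routine bookkeeping of the constants $\int_0^L me^{\alpha x}dx$, $\int_0^L e^{\alpha x}dx$, and $\int_0^L\int_0^L Ke^{\alpha x+\alpha y}dxdy$.
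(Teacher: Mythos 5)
Your proposal is correct and follows essentially the same route as the paper: fix $\beta_0=1$ from $g_2=0$, impose the mean-zero (Fredholm) solvability condition for $P_0$ on the resulting equation for $z$ --- which, using the defining relation for $c_0$, reduces to exactly the paper's system \eqref{cost} for $(h,\theta)$ --- and then solve \eqref{lastari} uniquely since $P_0$ is an isomorphism from $(X_1)_{\mathbb C}$ onto $(Y_1)_{\mathbb C}$. Your write-up is in fact slightly more explicit than the paper's about why $\mathscr{R}(P_0)$ is the zero-mean subspace and why $\theta=3\pi/2$ is rejected, but there is no substantive difference.
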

\begin{proof}
Obviously, $g_2(z,\beta,0)=0$ if and only if $\beta=\beta_{0}=1$. Then, substituting $\beta=\beta_0$ into
$g_1(z,\beta,h,\theta,0)=0$, we have
\begin{equation}\label{subequi}
P_0z=-c_0e^{\alpha x}\left(m(x)-c_0\int_0^L K(x,y)e^{\alpha y}dy\right)
+c_0^2 e^{\alpha x}\int_0^L K(x,y)e^{\alpha y}dy e^{-i\theta}+ihc_0e^{\alpha x}.
\end{equation}
It follows from Eq. \eqref{ovec} that
$$c_0\int_0^L\int_0^L K(x,y) e^{\alpha x+\alpha y }dxdy=\int_0^L m(x)e^{\alpha x}dx.$$
Then Eq. \eqref{subequi} has a solution $(z,h,\theta)$, where $z\in(X_1)_{\mathbb C}$, $h\ge0$, $\theta\in[0,2\pi]$, if and only if
\begin{equation}\label{cost}
\begin{cases}
c_0\int_0^L\int_0^L K(x,y) e^{\alpha x+\alpha y }dxdy\sin\theta=h\int_0^L e^{\alpha x}dx
\\
\int_0^L K(x,y) e^{\alpha x+\alpha y}dxdy\cos\theta=0\\
\end{cases}
\end{equation}
has a solution $(\theta,h)$ with $h\ge0$ and $\theta\in[0,2\pi]$,
which yields
\begin{equation}\label{hthe}
\theta=\theta_{0}=\pi/2,\;\;h=h_{0}=\ds\f{\int_0^L m(x)e^{\alpha x}dx}{\int_0^L e^{\alpha x}dx}.
\end{equation}
Substituting $h=h_0$ and $\theta=\theta_0$ into Eq. \eqref{subequi}, we see that the right side of Eq. \eqref{subequi} belongs to $\mathscr{R}\left(P_0\right)$, which implies that $z=z_0$.
\end{proof}
Then, we show that
$G(z,\beta,h,\theta,r)=0$ is also uniquely solvable for small $r$.
\begin{theorem}\label{cha}
There exist $r_2>0$ and a continuously differentiable mapping
$r\mapsto(z_r,\beta_r,h_r,\theta_r)$ from
$[0,r_2]$ to $(X_1)_{\mathbb C}\times \mathbb{R}^3$ such that
$(z_r,\beta_r,h_r,\theta_r)$ is the unique solution of the following equation
\begin{equation}\label{3.6G} \begin{cases}
G(z,\beta,h,\theta,r)=0,\\
z\in (X_1)_{\mathbb{C}},\;h>0,\;\beta\ge0, \;\theta\in[0,2\pi),\\
\end{cases}
\end{equation}
for $r\in[0,r_2]$.
\end{theorem}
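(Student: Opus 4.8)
The plan is to apply the implicit function theorem to $G=(g_1,g_2)$ at the base point $(z_0,\beta_0,h_0,\theta_0,0)$ furnished by Lemma \ref{l25}, and then to upgrade the resulting local uniqueness to uniqueness throughout the admissible region by a compactness argument. First I would compute the partial Fr\'echet derivative $T:=D_{(z,\beta,h,\theta)}G(z_0,\beta_0,h_0,\theta_0,0)$, a bounded linear operator from $(X_1)_{\mathbb C}\times\mathbb R^3$ into $Y_{\mathbb C}\times\mathbb R$, and show it is bijective. Because every term of $g_1$ other than $P_0z$ carries a factor $r$, at $r=0$ the $z$-dependence of $g_1$ reduces to $P_0$, and $g_2$ depends at $r=0$ only on $\beta$. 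This triangular structure is what makes $T$ tractable.

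Concretely, the $g_2$-component of $T$ is $(v,b,\eta,\vartheta)\mapsto 2\beta_0c_0^2L\,b=2c_0^2L\,b$, which determines $b$ uniquely from any real datum. With $b$ fixed, the $g_1$-component reads $P_0v+b\,\partial_\beta g_1+\eta\,\partial_h g_1+\vartheta\,\partial_\theta g_1=w$. I would project this onto $\mathscr N(P_0)=\mathrm{span}\{1\}$ by integrating over $(0,L)$; since $\int_0^LP_0v\,dx=0$, the constant part becomes a $2\times2$ real linear system for $(\eta,\vartheta)$. Using $\partial_hg_1=-ic_0e^{\alpha x}$ and $\partial_\theta g_1=c_0^2e^{\alpha x}\int_0^LK(x,y)e^{\alpha y}dy$ together with \eqref{ovec}, the coefficient of $\eta$ in $\int_0^Lg_1\,dx$ is $-ic_0\int_0^Le^{\alpha x}dx$ (purely imaginary, nonzero) while the coefficient of $\vartheta$ is $c_0\int_0^Lm(x)e^{\alpha x}dx$ (purely real, nonzero), so the real and imaginary parts decouple and the system is uniquely solvable. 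Finally, with $b,\eta,\vartheta$ determined, the mean-zero part of the equation is solved uniquely for $v\in(X_1)_{\mathbb C}$ because $P_0:(X_1)_{\mathbb C}\to(Y_1)_{\mathbb C}$ is a bijection. Hence $T$ is invertible, and the implicit function theorem yields $r_2>0$ and a $C^1$ branch $r\mapsto(z_r,\beta_r,h_r,\theta_r)$ solving $G=0$, locally unique near the base point.

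To promote this to uniqueness in the whole region $\{h>0,\ \beta\ge0,\ \theta\in[0,2\pi)\}$, I would argue by contradiction: if some sequence $r_n\to0$ carried admissible solutions distinct from the branch, I would first derive uniform a priori bounds. From $g_2=0$ we get $\beta_n^2=1-r_n^2\|z_n\|^2_{Y_{\mathbb C}}/(c_0^2L)\in[0,1]$, so $\beta_n$ is bounded; $\theta_n\in[0,2\pi)$ is bounded; and $h_n$ is bounded by Lemma \ref{nu} applied to the pair $\mu=irh$. The estimate on $\|z_n\|_{Y_{\mathbb C}}$ is obtained by pairing $g_1=0$ with $z_n$ and invoking Lemma \ref{lem21}: writing $P_0z_n$ as a term bounded in terms of $\beta_n,h_n$ plus terms of order $r_n\|z_n\|$, one gets $\la_2\|z_n\|\le C_1+C_2r_n\|z_n\|$, which for small $r_n$ bounds $\|z_n\|$. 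Elliptic regularity and the compact embedding then let me extract a convergent subsequence, pass to the limit in $G(\cdot,r_n)=0$, and obtain a solution of $G(\cdot,0)=0$; by the uniqueness in Lemma \ref{l25} this limit is $(z_0,\beta_0,h_0,\theta_0)$. Thus the purported solutions eventually enter the neighborhood of local uniqueness, a contradiction, and uniqueness holds after shrinking $r_2$.

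I expect the main obstacle to be the global uniqueness step rather than the invertibility of $T$: the delicate points are absorbing the $r_n\|z_n\|$ terms in the a priori estimate so that $\|z_n\|$ stays bounded, and securing convergence in a topology strong enough to pass to the limit in the nonlocal and $z$-dependent terms of $g_1$ and thereby apply Lemma \ref{l25}.
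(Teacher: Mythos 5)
Your proposal is correct and follows essentially the same route as the paper's proof: the implicit function theorem at the base point furnished by Lemma \ref{l25} (where you verify in detail the bijectivity of the derivative $T$ via the triangular structure and projection onto $\mathscr{N}(P_0)$, a step the paper simply declares ``obvious''), followed by the same compactness argument for uniqueness --- boundedness of $\beta, h, \theta$ via Lemma \ref{nu} and $g_2=0$, the $\la_2$-estimate from Lemma \ref{lem21} to bound $\|z\|_{Y_{\mathbb C}}$ after absorbing the $O(r\|z\|)$ terms, precompactness through the bounded inverse $P_0^{-1}$, and passage to the limit to invoke the uniqueness of Lemma \ref{l25}. No gaps.
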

\begin{proof}
Denote the Fr\'echet derivative of $G$ with respect to
$(z,\beta,h,\theta)$ at $(z_{0},\beta_{0},h_{0},\theta_{0},0)$ by $T=(T_1,T_2):(X_1)_{\mathbb C}\times \mathbb{R}^3\mapsto
Y_{\mathbb C}\times \mathbb{R}$.
Then, a direct calculation leads to
\begin{equation*}
\begin{split}
T_1(\chi,\kappa,\epsilon,\vartheta)=&
P_0\chi+\kappa c_0 e^{\alpha x}\left[m(x)-c_0\int_0^L K(x,y)e^{\alpha y}dy\right]+i\kappa c_0^2 e^{\alpha x}\int_0^LK(x,y)e^{\alpha y}dy\\
-&ic_0\kappa e^{\alpha x}+\vartheta c_0^2e^{\alpha x}\int_0^LK(x,y)e^{\alpha y}dy-i\epsilon c_0e^{\alpha x},\\
 T_2(\kappa)=&2\kappa c_0^2L.
\end{split}
\end{equation*}
Obviously, $T$ is
a bijection from $(X_1)_{\mathbb C}\times \mathbb{R}^3$ to $Y_{\mathbb
C}\times \mathbb{R}$. It follows from the implicit function theorem that
there exist $r_2>0$ and a continuously differentiable mapping
$r\mapsto(z_r,\beta_r,h_r,\theta_r)$ from
$[0,r_2]$ to $X_{\mathbb C}\times \mathbb{R}^3$ such that
$G(z_r,\beta_r,h_r,\theta_r,r)=0$. Now, we show the uniqueness, and only need to prove that if $z^r\in (X_1)_{\mathbb{C}}$, $\beta^{r}\ge0$, $h^{r}>0$, $\theta^{r}\in [0,2\pi)$ satisfy $G(z^r,\beta^r,h^r,\theta^r,r)=0$,
then
$(z^r,\beta^r,h^r,\theta^r)\rightarrow(z_{0},1,h_0,\pi/2)$
as $r\rightarrow 0$ in $X_{\mathbb{C}}\times
\mathbb{R}^3.$ From Lemma \ref{nu} and Eq. \eqref{g1}, we obtain that $\{h^r\}, \{\beta^r\}$ and $\{\theta^r\}$ are
bounded for $r\in[0,r_1]$. Multiplying the first equation of \eqref{g1} by $\overline {z^r}$, and integrating the result over $(0,L)$, we obtain that there exist positive constants $M_1$ and $M_2$ such that
$\la_2\|z^r\|^2_{Y_{\mathbb{C}}}\leq |
\langle z^r,P_0z^r\rangle|\le
M_1\|z^r\|_{Y_{\mathbb{C}}}+
M_2r\|z^r\|^2_{Y_{\mathbb{C}}}
$
for $r\in(0,r_2]$, where $\la_2$ is defined as in Lemma \ref{lem21}.
Then, for sufficiently small $r_2$, $\{z^r\}$ is bounded in
$Y_{\mathbb C}$ for $r\in[0,r_2]$. Note that $P_0:(X_1)_{\mathbb C}\to (Y_1)_{\mathbb C} $ has a bounded inverse $P_0^{-1}$. Then,
$\{z^r\}$ is also bounded in $(X_1)_{\mathbb C}$, and
$\{(z^r,\beta^r,h^r,\theta^r): r\in(0,r_2]\}$ is
precompact in $Y_{\mathbb C}\times\mathbb{R}^3.$ Therefore, there exists
a subsequence $\{(z^{r^n},\beta^{r^n},h^{r^n},\theta^{r^n})\}_{n=1}^\infty$
 such that
$$
(z^{r^n},\beta^{r^n},h^{r^n},\theta^{r^n})
 \to(z^{0},\beta^{0},h^{0},\theta^{0})\;\;\text{in}\;\;Y_{\mathbb C}\times \mathbb{R}^3,$$  and $r^n\rightarrow0$ as $n\rightarrow\infty$.
Taking the limit of the equation
$$P_0^{-1}g_1(z^{r^n},\beta^{r^n},h^{r^n},\theta^{r^n},r^n)=0$$
as $n\rightarrow\infty$, we see that
$$(z^{r^n},\beta^{r^n},h^{r^n},\theta^{r^n})
 \to(z^{0},\beta^{0},h^{0},\theta^{0})\;\;\text{in}\;\;X_{\mathbb C}\times \mathbb{R}^3,$$  as $n\rightarrow\infty$,
and $(z^{0},r^{0},h^{0},\theta^{0})$ is also a solution of Eq. \eqref{3.5G}, which leads to
$$(z^{0},r^{0},h^{0},\theta^{0})=(z_{0},\beta_{0},h_{0},\theta_{0}).$$ This completes the proof.
\end{proof}
Finally, from Theorem \ref{cha}, we derive the following result.
\begin{theorem}\label{c25}
For $r\in(0,r_2],$ $(\nu,\tau,\psi)$ solves
\begin{equation*}
\begin{cases}
\Delta(r,i\nu,\tau)\psi=0,\\
\nu>0,\;\tau\ge0,\;\psi (\ne 0) \in X_{\mathbb C},\\
\end{cases}
\end{equation*}
if and
only if
\begin{equation}\label{par}
\nu=\nu_r=rh_r,\;\psi= a\psi_r,\;
\tau=\tau_{n}=\frac{\theta_r+2n\pi}{\nu_r},\;\; n=0,1,2,\cdots,
\end{equation}
where $\psi_r=\beta_rc_0+rz_r$,
$a$ is a nonzero constant, and
$(z_r,\beta_r,h_r,\theta_r)$ is defined as in Theorem
\ref{cha}.
\end{theorem}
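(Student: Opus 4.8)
The plan is to reduce the purely imaginary eigenvalue problem $\Delta(r,i\nu,\tau)\psi=0$ to the auxiliary system $G(z,\beta,h,\theta,r)=0$ of \eqref{g1} and then invoke the unique solvability already established in Theorem \ref{cha}. First I would record the equivalence set up before \eqref{eigen}: putting $\mu=i\nu$ in \eqref{triangle} and multiplying by $e^{\alpha x}$ shows that $\Delta(r,i\nu,\tau)\psi=0$ is exactly \eqref{eigen} with $\theta=\nu\tau$. Since \eqref{eigen} is linear and homogeneous in $\psi$, its solution set is invariant under multiplication by nonzero complex scalars; I would exploit this freedom, together with the normalization $\|\psi\|^2_{Y_{\mathbb C}}=c_0^2L$ and the decomposition $\psi=\beta c_0+rz$ with $\beta\ge0$, $z\in(X_1)_{\mathbb C}$ from \eqref{eigen2}, to bring any given solution into the form handled by $G$.

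For the forward direction, suppose $(\nu,\tau,\psi)$ solves the problem with $\nu>0$, $\tau\ge0$, $\psi\ne0$. Multiplying $\psi$ by a suitable nonzero scalar $a^{-1}$, I may assume that its component along $\text{span}\{1\}$ is real and nonnegative and that $\|\psi\|^2_{Y_{\mathbb C}}=c_0^2L$; writing $\psi=\beta c_0+rz$ and $\nu=rh$ then turns \eqref{eigen} into the system $G(z,\beta,h,\theta,r)=0$, where $\theta\in[0,2\pi)$ is $\nu\tau$ reduced modulo $2\pi$. Theorem \ref{cha} forces $(z,\beta,h,\theta)=(z_r,\beta_r,h_r,\theta_r)$, whence $\nu=rh_r=\nu_r$ and $\psi$ is a scalar multiple of $\psi_r=\beta_rc_0+rz_r$. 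Because the equation depends on $\tau$ only through $e^{-i\nu\tau}$, which is $2\pi$-periodic in $\nu\tau$, the constraint $\nu_r\tau\equiv\theta_r\pmod{2\pi}$ together with $\tau\ge0$, $\theta_r\in[0,2\pi)$ and $\nu_r>0$ yields precisely $\tau=\tau_n=(\theta_r+2n\pi)/\nu_r$ for $n=0,1,2,\ldots$.

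The converse is a direct verification: substituting $\nu=\nu_r$, $\tau=\tau_n$ and $\psi=a\psi_r$ into \eqref{eigen}, and using $e^{-i\nu_r\tau_n}=e^{-i\theta_r}$ together with $g_1(z_r,\beta_r,h_r,\theta_r,r)=0$, shows the equation is satisfied for every nonzero constant $a$ and every $n\ge0$.

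I expect the only genuinely delicate points to be bookkeeping rather than analysis: correctly tracking the scalar factor $a$, i.e. using homogeneity of \eqref{eigen} to normalize to $\beta\ge0$ and $\|\psi\|^2_{Y_{\mathbb C}}=c_0^2L$ so that the normalized system $G=0$ captures \emph{all} solutions and not merely normalized ones, and translating the single value $\theta_r\in[0,2\pi)$ from Theorem \ref{cha} into the full discrete family $\{\tau_n\}$ through the $2\pi$-periodicity in $\theta=\nu\tau$. The substantive content has already been absorbed into Lemma \ref{nu}, Lemma \ref{l25} and Theorem \ref{cha}, so this final statement is essentially their repackaging in the original variables $(\nu,\tau,\psi)$.
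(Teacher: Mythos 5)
Your proposal is correct and follows exactly the route the paper intends: the paper states Theorem \ref{c25} with no separate proof, deriving it directly from the equivalence between $\Delta(r,i\nu,\tau)\psi=0$ and the normalized system \eqref{g1} (via the decomposition \eqref{eigen2} and $\nu=rh$, $\theta=\nu\tau$ mod $2\pi$) together with the unique solvability in Theorem \ref{cha}. Your write-up simply makes explicit the scalar-normalization and $2\pi$-periodicity bookkeeping that the paper leaves implicit.
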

\subsection{Distribution of the eigenvalues and Hopf bifurcation}
In this subsection, we will show the distribution of the eigenvalues of $A_\tau(r)$ and the existence of the Hopf bifurcation for model \eqref{delay}. Throughout this subsection, unless otherwise specified, we always assume $r\in(0,r_2]$, and the value of $r_2$ may be chosen smaller than the one in Theorem \ref{cha}, since further perturbation
arguments are used. Firstly, we show the distribution of the eigenvalues of $A_\tau(r)$ for $\tau=0$.
\begin{theorem}\label{nodelay1}
For $r\in(0,r_2],$ all the eigenvalues of
$A_{\tau}(r)$ have negative real parts when $\tau=0$.
\end{theorem}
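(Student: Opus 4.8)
The plan is to argue by contradiction for small $r$ and to reduce the question to a scalar identity on the constant mode $\mathscr{N}(P_0)=\text{span}\{1\}$. Since the statement permits $r_2$ to be shrunk, suppose it fails: then there are a sequence $r_n\downarrow0$ and eigenvalues $\mu_{r_n}$ of $A_0(r_n)$ with ${\mathcal Re}\,\mu_{r_n}\ge0$, with eigenfunctions $\psi_{r_n}\ne0$ normalized by $\|\psi_{r_n}\|_{Y_{\mathbb C}}^2=c_0^2L$. At $\tau=0$ the factor $e^{-\mu\tau}=1$, so $\psi_{r_n}$ solves $P_0\psi+re^{\alpha x}\Tilde K(r)\psi-ru_re^{\alpha x}\int_0^LK(x,y)e^{\alpha y}\psi(y)dy-\mu e^{\alpha x}\psi=0$. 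Applying Lemma \ref{nu} with $\tau=0$ shows $\mu_{r_n}/r_n$ is bounded, so I write $\mu_{r_n}=r_n\sigma_{r_n}$ with $\{\sigma_{r_n}\}$ bounded and ${\mathcal Re}\,\sigma_{r_n}\ge0$, and decompose $\psi_{r_n}=\beta_{r_n}c_0+r_nz_{r_n}$ with $z_{r_n}\in(X_1)_{\mathbb C}$ and $\beta_{r_n}\in[0,1]$ exactly as in \eqref{eigen2}.

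Next I bound $\{z_{r_n}\}$ in the same way as in the proof of Theorem \ref{cha}. Substituting the decomposition into the eigenvalue equation and using $P_0c_0=0$, dividing by $r_n$ yields $P_0z_{r_n}=-e^{\alpha x}\Tilde K(r_n)\psi_{r_n}+u_{r_n}e^{\alpha x}\int_0^LK(x,y)e^{\alpha y}\psi_{r_n}(y)dy+\sigma_{r_n}e^{\alpha x}\psi_{r_n}$, whose right side is $O(1)+O(r_n\|z_{r_n}\|_{Y_{\mathbb C}})$ in $Y_{\mathbb C}$. Pairing with $\overline{z_{r_n}}$ and invoking the coercivity $|\langle z_{r_n},P_0z_{r_n}\rangle|=|\langle P_0z_{r_n},z_{r_n}\rangle|\ge\la_2\|z_{r_n}\|_{Y_{\mathbb C}}^2$ of Lemma \ref{lem21} gives $\la_2\|z_{r_n}\|_{Y_{\mathbb C}}^2\le M_1\|z_{r_n}\|_{Y_{\mathbb C}}+M_2r_n\|z_{r_n}\|_{Y_{\mathbb C}}^2$, so $\{z_{r_n}\}$ is bounded in $Y_{\mathbb C}$ once $n$ is large; since $P_0$ has bounded inverse on $(Y_1)_{\mathbb C}$ it is bounded in $(X_1)_{\mathbb C}$, hence precompact in $Y_{\mathbb C}$. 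Passing to a subsequence, $(z_{r_n},\beta_{r_n},\sigma_{r_n})\to(z^*,\beta^*,\sigma^*)$ with ${\mathcal Re}\,\sigma^*\ge0$, and the normalization $\beta_{r_n}^2c_0^2L+r_n^2\|z_{r_n}\|_{Y_{\mathbb C}}^2=c_0^2L$ forces $\beta^*=1$, so $\psi_{r_n}\to c_0$ in $Y_{\mathbb C}$.

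Finally I project onto $\mathscr{N}(P_0)$ by integrating the eigenvalue equation over $(0,L)$; the Neumann condition kills $\int_0^LP_0\psi_{r_n}dx$, and after dividing by $r_n$ and letting $n\to\infty$ (using $\psi_{r_n}\to c_0$, $u_{r_n}\to c_0$ and $\Tilde K(r_n)\to\Tilde K(0)$) I am left with a single scalar relation. The defining identity \eqref{ovec} for $c_0$ makes the $\Tilde K(0)$ contribution vanish, and the surviving balance reads $-c_0\int_0^Lm(x)e^{\alpha x}dx-\sigma^*c_0\int_0^Le^{\alpha x}dx=0$, that is $\sigma^*=-h_0<0$ with $h_0$ as in \eqref{lastar}. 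This contradicts ${\mathcal Re}\,\sigma^*\ge0$, so for all sufficiently small $r$ every eigenvalue of $A_0(r)$ lies in the open left half plane.

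The only genuine obstacle is the nonlocal competition operator $u_r\int_0^LK(x,y)e^{\alpha y}\cdot\,dy$: unlike the local Busenberg--Huang case $K=\delta(x-y)$, it is neither self-adjoint nor of definite sign with respect to the weighted product $\int_0^Le^{\alpha x}\overline{u}v\,dx$, so multiplying the equation by $e^{\alpha x}\overline\psi$ and taking real parts does not by itself sign ${\mathcal Re}\,\mu$. This is exactly why I route through small $r$: Lemma \ref{nu} confines every eigenvalue with ${\mathcal Re}\ge0$ to a disk of radius $O(r)$ about the origin, collapsing every such eigenfunction onto the constant mode, on which the leading-order balance is governed solely by $h_0$ and the sign is forced.
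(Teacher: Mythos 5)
Your proposal is correct and takes essentially the same route as the paper's proof: contradiction along a sequence $r_n\to 0$, rescaling $\mu_{r_n}=r_n\sigma_{r_n}$ via Lemma \ref{nu}, the decomposition $\psi_{r_n}=\beta_{r_n}c_0+r_nz_{r_n}$ with the coercivity bound of Lemma \ref{lem21}, compactness to extract a limit with $\beta^*=1$, and projection onto the constant mode together with \eqref{ovec} to force $\sigma^*=-h_0<0$, contradicting ${\mathcal Re}\,\sigma^*\ge0$. The only immaterial difference is the order of operations at the end: you integrate over $(0,L)$ first and then pass to the limit, whereas the paper first passes to the limit equation (via $P_0^{-1}$) and then integrates to obtain the same scalar relation.
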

\begin{proof}
To the contrary, there exists a sequence
$\{r^n\}_{n=1}^\infty$ such that $\ds\lim_{n\to\infty}r^n=0$, and for $n\ge1$,
$r^n>0$, and
corresponding eigenvalue problem
\begin{equation}\label{nodelay}
\begin{cases}
P_0\psi+r^ne^{\alpha x}\Tilde K(r^n)\psi
-r^ne^{\alpha x} u_{r^n}\int_0^L K(x,y)e^{\alpha y}
\psi(y)dy=\mu e^{\alpha x}\psi,\;\;x\in(0,L)\\
\psi_x(0)=\psi_x(L)=0
\end{cases}
\end{equation}
has an eigenvalue $\mu_{r^n}$ with ${\mathcal Re}\mu_{r^n}\ge0$, where $P_0$ and $\Tilde K(r)$ are defined as in Eqs. \eqref{LL} and \eqref{Ar} respectively. Ignoring a scalar factor, we assume that the associated
eigenfunction $\psi_{r^n}$ with respect to $\mu_{r^n}$ satisfies
$\|\psi_{r^n}\|^2_{Y_{\mathbb{C}}}=c_0^2L$, and
$\psi_{r^n}$ can be represented as $\psi_{r^n}=\beta_{r^n}c_0+r^nz_{r^n}$,
where $ \beta_{r^n}\ge0$, $z_{r_n}\in (X_1)_{\mathbb C}$ and $c_0$ is defined as in Eq. \eqref{ovec}.
As in Section 2.1, $\mu_{r^n}$ can also be represented as $\mu_{r^n}=r^n h_{r^n}$, and
it follows from Lemma \ref{nu} that $\left|h_{r^n}\right|$ is bounded for $r\in[0,r_2]$.
Then, substituting $\psi=\psi_{r^n}=\beta_{r^n}c_0+r^nz_{r^n}$ and $\mu=r^n h_{r^n}$ into the first equation of Eq.
\eqref{nodelay}, we see that $(z_{r^n},\beta_{r^n},h_{r^n})$ satisfies the following system
\begin{equation}\label{hi}
\begin{split}
&H_1(z,\beta,h,r_n):
=P_0 z+e^{\alpha x}\Tilde K(r^n)(\beta c_0+r^nz)\\&-e^{\alpha x}u_{r^n}\int_0^L K(x,y)e^{\alpha y}[\beta c_0+r^nz(y)]dy-he^{\alpha x}(\beta c_0+r^nz)=0,\\
&H_2(z,\beta,r_n)=(\beta^2-1)c_0^2L+(r^n)^2\|z\|^2_{Y_{\mathbb{C}}}=0.
\end{split}
\end{equation}
Using the arguments similar to Theorem \ref{cha}, we see that $(z_{r^n},\beta_{r^n},h_{r^n})$ is bounded in $Y_{\mathbb C}\times \mathbb R\times\mathbb C$. Since the
operator $P_0:(X_1)_{\mathbb C}\mapsto (Y_1)_{\mathbb C}$
has a bounded inverse $P_0^{-1}$, by applying $P_0^{-1}$ on
$$H_1(z_{r^n},\beta_{r^n},h_{r^n},r^n)=0,$$ we find that
$\{z_{r^n}\}_{n=1}^\infty$ is also bounded in $(X_1)_{\mathbb C}$, and consequently
$\{(z_{r^n},\beta_{r^n},h_{r^n})\}_{n=1}^\infty$ is
precompact in $Y_{\mathbb C}\times\mathbb{R}\times\mathbb{C}$. Therefore, there is
a subsequence $\{(z_{r^{n_k}},\beta_{r^{n_k}},h_{r^{n_k}})\}_{k=1}^\infty$
convergent to $(z^{*},\beta^{*},h^*)$ as $k\to \infty$ in the norm of $Y_{\mathbb C}\times\mathbb{R}\times\mathbb{C}$, where
$\beta^*=1$, $z^*\in Y_{\mathbb{C}}$ and $h^{*}\in\mathbb{C}$ with ${\mathcal Re}h^*\ge0$.
Taking the limit of the equation
$$P_0^{-1}H_1(z_{r^{n_k}},\beta_{r^{n_k}},h_{r^{n_k}})=0$$
as $k\rightarrow\infty$, we see that $z^*\in (X_1)_{\mathbb{C}}$ and $(z^*, \beta^*,h^*)$ satisfies
\begin{equation*}
\begin{split}
&P_0z^*+c_0e^{\alpha x}\left(m(x)-c_0\int_0^L K(x,y)e^{\alpha y}dy\right)\\
&-c_0^2e^{\alpha x}\int_0^LK(x,y)e^{\alpha y}dy-h^*c_0e^{\alpha x}=0.
\end{split}
\end{equation*}
Therefore,$$-c_0\int_0^L\int_0^L K(x,y) e^{\alpha x+\alpha y }dxdy=h^*\int_0^L e^{\alpha x}dx,$$ which leads to $h^*<0$. This contradicts with ${\mathcal Re}h^*\ge0$.
\end{proof}
Then, we show the distribution of the eigenvalues of $A_\tau(r)$ for $\tau>0$.
As in \cite{Chen2012}, one need to study the adjoint operator $\tilde\Delta(r,i\nu,\tau)$ of $e^{\alpha x}\Delta(r,i\nu,\tau)$, which takes the following form:
\begin{equation}\label{adj}
\tilde\Delta(r,i\nu,\tau)\tilde \psi=P_0\tilde \psi+r e^{\alpha x}\Tilde K(r)\psi
-r e^{\alpha x}\int_0^L K(y,x)u_r(y)e^{\alpha y}
\tilde \psi(y)dy e^{i\nu\tau}+i\nu e^{\alpha x}\tilde \psi.
\end{equation}
It follows that \begin{equation}\label{dual}
\langle
\tilde\psi,e^{\alpha x}\Delta(r,i\nu,\tau)\psi\rangle=
\langle\tilde\Delta(r,i\nu,\tau)\tilde\psi,\psi\rangle,
\end{equation}
for any $\tilde \psi, \psi\in X_{\mathbb C}$, and
$$\sigma_p(e^{\alpha x}\Delta(r,i\nu,\tau))=\sigma_p(\tilde\Delta(r,i\nu,\tau)).$$
Now, we consider the corresponding adjoint equation
\begin{equation}\label{eigendual}
P_0\tilde\psi+re^{\alpha x}\Tilde K(r)\tilde \psi-re^{\alpha x}\int_{0}^LK(y,x)e^{\alpha y}u_r(y)\tilde\psi(y)dy
e^{i\tilde\theta}+i\tilde\nu e^{\alpha x}\tilde\psi=0,\;0\ne\tilde\psi\in
X_{\mathbb{C}}.
\end{equation}
Note that if Eq. \eqref{eigendual} is solvable for some value of
$\tilde\nu>0$, $\tilde\theta\in[0,2\pi)$ and $\tilde\psi(\ne0)\in
X_{\mathbb{C}}$, then
$$
\tilde\Delta(r,i\tilde\nu,\tilde\tau_n)\tilde\psi=0,\;\;
\text{where}\;\;
\tilde\tau_n=\frac{\tilde\theta+2n\pi}{\tilde\nu},\;\;
n=0,1,2,\cdots.
$$
Similarly, ignoring a scalar factor, $\tilde \psi$ in Eq. \eqref{eigendual} can also be represented as
\begin{equation}
\label{eigen2dual}
\begin{split}
&\tilde \psi= \tilde \beta c_0+r \tilde z,\;\; \tilde z\in (X_1)_{\mathbb{C}},\;\;\tilde \beta\geq0, \\
 &\|\tilde \psi\|^2_{Y_{\mathbb{C}}}=\tilde \beta^2 c_0^2L
 +r^2\|\tilde z\|^2_{Y_{\mathbb{C}}}= c_0^2L,
 \end{split}
 \end{equation}
 where $c_0$ is defined as in Eq. \eqref{ovec}.
Then, substituting the first equation of \eqref{eigen2dual} and $\tilde \nu=r\tilde h$
into Eq. \eqref{eigendual}, we obtain that $(\tilde \nu,\tilde \theta,\tilde \psi)$ solves Eq. \eqref{eigendual}, where $\tilde \nu>0$, $\tilde \theta\in[0,2\pi)$ and $\tilde \psi\in X_{\mathbb{C}}(\|\tilde\psi\|^2_{Y_{\mathbb{C}}}= c_0^2L)$,
if and only if the following system:
\begin{equation}\label{g1dual}
\begin{cases}\tilde g_1(\tilde z,\tilde \beta,\tilde h,\tilde\theta,r):=P_0\tilde z+e^{\alpha x}\Tilde K(r)(\tilde \beta c_0+r\tilde z)+ihe^{\alpha x}(\tilde \beta c_0+r\tilde z)
\\~~~~~~~~~~~~~~~~~~~-e^{\alpha x}\int_0^L K(y,x)e^{\alpha y}u_r(y)(\tilde \beta c_0+r\tilde z(y))dy e^{i\tilde\theta}=0\\
 g_2(\tilde z,\tilde \beta,r):=(\tilde \beta^2-1)c_0^2L+r^2\|\tilde z\|^2_{Y_{\mathbb{C}}}=0
\end{cases}
\end{equation}
has a solution $(\tilde z,\tilde \beta,\tilde h,\tilde \theta)$, where $\tilde z\in (X_1)_{\mathbb{C}}$, $\tilde \beta\ge 0$, $\tilde h>0$, and $\tilde \theta\in[0,2\pi)$.
Define
$\tilde G:(X_1)_{\mathbb C}\times \mathbb{R}^4\to
Y_{\mathbb C}\times \mathbb{R}$ by $\tilde G=(\tilde g_1,\tilde g_2)$. By the arguments similar to Lemma \ref{l25}, we obtain
that $G(\tilde z,\tilde \beta,\tilde h,\tilde \theta,0)=0$ is also uniquely solvable.
\begin{lemma}\label{l25dual}
The following equation \begin{equation}\label{3.5Gdual}
\begin{cases}
\tilde G(\tilde z,\tilde \beta,\tilde h,\tilde \theta,0)=0\\
\tilde z\in (X_1)_{\mathbb{C}},\;\tilde h\ge0,\;\tilde \beta\ge0,\; \tilde \theta\in[0,2\pi]\\
\end{cases}
\end{equation} has a unique solution $(\tilde z_{0},\tilde \beta_{0},\tilde h_{0},\tilde \theta_{0})$, where
\begin{equation}\label{lastardual}
    \tilde \beta_{0}=1,\;\;\tilde\theta_{0}=\pi/2,\;\;\tilde h_{0}=h_0,
\end{equation}
and $\tilde z_{0}\in(X_1)_{\mathbb C}$ is the unique solution of
\begin{equation}\label{lastaridual}
P_0 z=-c_0e^{\alpha x}\left[m(x)-c_0\int_0^L K(x,y)e^{\alpha y}dy\right]+ic_0^2 e^{\alpha x}\int_0^L K(y,x)e^{\alpha y}dy-ic_0 e^{\alpha x}.
\end{equation}

\end{lemma}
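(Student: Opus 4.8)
The plan is to run the proof of Lemma \ref{l25} essentially verbatim, keeping careful track of the three features that distinguish the adjoint system \eqref{g1dual} from the primal system \eqref{g1}: the sign change $+i\tilde h e^{\alpha x}(\cdots)$ in place of $-ihe^{\alpha x}(\cdots)$, the transposed kernel $K(y,x)$ in place of $K(x,y)$, and the factor $e^{i\tilde\theta}$ in place of $e^{-i\theta}$. First I would dispose of the second component: at $r=0$ the constraint $\tilde g_2=0$ reads $(\tilde\beta^2-1)c_0^2L=0$, so $\tilde\beta\ge0$ forces $\tilde\beta_0=1$.

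Next I would substitute $\tilde\beta=1$, $r=0$ and $u_0=c_0$ into $\tilde g_1=0$, obtaining an equation of the form $P_0\tilde z=\text{RHS}(\tilde h,\tilde\theta)$, with right-hand side $-c_0e^{\alpha x}[m(x)-c_0\int_0^L K(x,y)e^{\alpha y}dy]-i\tilde h c_0 e^{\alpha x}+c_0^2 e^{\alpha x}\int_0^L K(y,x)e^{\alpha y}dy\,e^{i\tilde\theta}$. Since $\mathscr{R}(P_0)=(Y_1)_{\mathbb C}$ is exactly the set of functions with vanishing integral over $(0,L)$, this equation is solvable for $\tilde z\in(X_1)_{\mathbb C}$ if and only if the right-hand side integrates to zero.

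The key computation is this integrability condition. Using the definition \eqref{ovec} of $c_0$, the $\tilde h$- and $\tilde\theta$-independent part integrates to zero on its own, since $c_0\int_0^L\int_0^L K(x,y)e^{\alpha x+\alpha y}dxdy=\int_0^L m(x)e^{\alpha x}dx$. For the transposed-kernel term I would observe that $\int_0^L\int_0^L K(y,x)e^{\alpha x+\alpha y}dxdy=\int_0^L\int_0^L K(x,y)e^{\alpha x+\alpha y}dxdy$ by swapping the dummy variables $x$ and $y$, so it contributes $c_0 e^{i\tilde\theta}\int_0^L m(x)e^{\alpha x}dx$. The integrability condition then reduces, exactly as in \eqref{cost}, to $\cos\tilde\theta\int_0^L m(x)e^{\alpha x}dx=0$ together with $\tilde h\int_0^L e^{\alpha x}dx=\sin\tilde\theta\int_0^L m(x)e^{\alpha x}dx$. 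Since $\int_0^L m(x)e^{\alpha x}dx>0$ by $(\mathbf{A}_1)$ and $\tilde h\ge0$, the only admissible solution is $\tilde\theta_0=\pi/2$ and $\tilde h_0=\int_0^L m(x)e^{\alpha x}dx/\int_0^L e^{\alpha x}dx=h_0$.

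Finally, inserting $\tilde\theta_0=\pi/2$ and $\tilde h_0=h_0$ places the right-hand side in $\mathscr{R}(P_0)$, so the bounded inverse $P_0^{-1}:(Y_1)_{\mathbb C}\to(X_1)_{\mathbb C}$ yields the unique $\tilde z_0$, which is precisely \eqref{lastaridual}. I do not expect any serious obstacle here: the argument is a faithful mirror of Lemma \ref{l25}, and the single point that genuinely requires an argument (rather than mechanical substitution) is the kernel-transposition identity $\int_0^L\int_0^L K(y,x)e^{\alpha x+\alpha y}dxdy=\int_0^L\int_0^L K(x,y)e^{\alpha x+\alpha y}dxdy$. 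This is exactly what forces the adjoint problem to reproduce the same constant $c_0$, and hence the same values $\tilde h_0=h_0$ and $\tilde\theta_0=\pi/2$, as the primal problem of Lemma \ref{l25}.
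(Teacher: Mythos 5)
Your proposal is correct and is essentially the paper's own argument: the paper gives no separate proof of this lemma, stating only that it follows ``by the arguments similar to Lemma~\ref{l25}'', and your write-up is precisely that mirror argument with the right bookkeeping (the sign flip on $i\tilde h$, the factor $e^{i\tilde\theta}$, and the dummy-variable swap $\int_0^L\int_0^L K(y,x)e^{\alpha x+\alpha y}\,dx\,dy=\int_0^L\int_0^L K(x,y)e^{\alpha x+\alpha y}\,dx\,dy$, which is indeed the one step needing an actual observation). One small remark: your derivation gives the last term of the right-hand side as $-i\tilde h_0c_0e^{\alpha x}=-ih_0c_0e^{\alpha x}$, which indicates that the factor $h_0$ is missing by typo in the paper's Eq.~\eqref{lastaridual} (compare Eq.~\eqref{lastari}), so your version is the correct one.
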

The following results can also be proved similarly as in Theorems \ref{cha} and \ref{c25}.
\begin{theorem}\label{chadual}
\begin{enumerate}
  \item [$(I)$] There exists a continuously differentiable mapping
$$r\mapsto(\tilde z_r,\tilde \beta_r,\tilde h_r,\tilde \theta_r)$$ from
$[0,r_2]$ to $(X_1)_{\mathbb C}\times \mathbb{R}^3$ such that
$(\tilde z_r,\tilde \beta_r,\tilde h_r,\tilde \theta_r)$ is the unique solution of the following equation
\begin{equation}\label{3.6Gdual} \begin{cases}
\tilde G(\tilde z,\tilde \beta,\tilde h,\tilde \theta,r)=0,\\
\tilde z\in (X_1)_{\mathbb{C}},\;\tilde h>0,\;\tilde \beta\ge0, \;\tilde\theta\in[0,2\pi),\\
\end{cases}
\end{equation}
for $r\in[0,r_2]$.
  \item [$(II)$] For $r\in[0,r_2]$, the eigenvalue problem
$$
\tilde\Delta(r,i\tilde\nu,\tilde\tau)\tilde\psi=0,\;\;\tilde\nu>0,\;\;\tilde\tau\ge0,\;\;
0\ne \tilde\psi \in X_{\mathbb C}
$$
has a solution $(\tilde\nu,\tilde \tau,\tilde \psi)$ if and only if
\begin{equation}\label{pardual}
\tilde\nu=\tilde\nu_{r}=r\tilde h_r,\;\;\tilde \psi=a \tilde\psi_r,\;\;
\tilde\tau=\tilde\tau_{n}=\frac{\tilde\theta_r+2n\pi}{\tilde\nu_r},\;\;
n=0,1,2,\cdots,\;\;
\end{equation}
where $a$ is a nonzero constant, $\tilde\psi_r=\tilde\beta_r\phi+r\tilde z_r$, and $\tilde z_r,\tilde
\beta_r,\tilde h_r,\tilde\theta_r$ are defined as in Part $(I)$.
\end{enumerate}

\end{theorem}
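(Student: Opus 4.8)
The plan is to follow the proofs of Theorems \ref{cha} and \ref{c25} line by line, replacing the kernel $K(x,y)$ by its transpose $K(y,x)$ and flipping the sign of the purely imaginary terms, as dictated by the appearance of $+i\tilde h$ in place of $-ih$ in \eqref{g1dual}. Lemma \ref{l25dual} already supplies the unique solution $(\tilde z_0,\tilde\beta_0,\tilde h_0,\tilde\theta_0)=(\tilde z_0,1,h_0,\pi/2)$ of $\tilde G(\cdot,0)=0$, which serves as the base point at $r=0$.

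For Part $(I)$, I would first compute the Fr\'echet derivative $\tilde T=(\tilde T_1,\tilde T_2)$ of $\tilde G$ with respect to $(\tilde z,\tilde\beta,\tilde h,\tilde\theta)$ at $(\tilde z_0,\tilde\beta_0,\tilde h_0,\tilde\theta_0,0)$. As in Theorem \ref{cha}, the scalar component satisfies $\tilde T_2(\kappa)=2\kappa c_0^2L$, so surjectivity onto the $\mathbb R$ factor determines $\kappa$ and injectivity forces $\kappa=0$. With $\kappa=0$, the term $P_0\chi$ covers $(Y_1)_{\mathbb C}$ exactly, since $P_0:(X_1)_{\mathbb C}\to(Y_1)_{\mathbb C}$ is invertible, while the parameters $\epsilon$ and $\vartheta$ supply, respectively, the imaginary and real parts of the one-dimensional kernel direction $\mathscr N(P_0)=\mathrm{span}\{1\}$; the relevant projections $\int_0^Le^{\alpha x}dx$ and $\int_0^L\int_0^LK(y,x)e^{\alpha x+\alpha y}dxdy$ are nonzero under $(\mathbf{A}_1)$ and $(\mathbf{A}_2)$. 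Hence $\tilde T$ is a bijection from $(X_1)_{\mathbb C}\times\mathbb R^3$ onto $Y_{\mathbb C}\times\mathbb R$, and the implicit function theorem yields the $C^1$ branch $r\mapsto(\tilde z_r,\tilde\beta_r,\tilde h_r,\tilde\theta_r)$ near $r=0$.

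Global uniqueness on $[0,r_2]$ I would then establish by the precompactness argument of Theorem \ref{cha}. Given any solution $(\tilde z^r,\tilde\beta^r,\tilde h^r,\tilde\theta^r)$ of \eqref{3.6Gdual}, the bound on $\tilde h^r$ follows from an inner-product estimate on \eqref{eigendual} identical in form to Lemma \ref{nu} (or from the spectral identity $\sigma_p(e^{\alpha x}\Delta)=\sigma_p(\tilde\Delta)$ recorded after \eqref{dual}), while $\tilde\beta^r$ and $\tilde\theta^r$ are bounded by the constraint $\tilde g_2=0$ and by $\tilde\theta\in[0,2\pi)$. Multiplying $\tilde g_1=0$ by $\overline{\tilde z^r}$, integrating, and invoking Lemma \ref{lem21} bounds $\|\tilde z^r\|_{Y_{\mathbb C}}$; applying $P_0^{-1}$ bounds it in $(X_1)_{\mathbb C}$, so the family is precompact in $Y_{\mathbb C}\times\mathbb R^3$. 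Every subsequential limit solves $\tilde G(\cdot,0)=0$ and hence, by Lemma \ref{l25dual}, equals $(\tilde z_0,1,h_0,\pi/2)$; this forces the whole family to converge to the branch, proving uniqueness.

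For Part $(II)$ I would translate back. By the derivation preceding \eqref{g1dual}, the adjoint eigenproblem \eqref{eigendual}, normalized by \eqref{eigen2dual} with $\tilde\nu=r\tilde h$, is solvable exactly when $\tilde G(\tilde z,\tilde\beta,\tilde h,\tilde\theta,r)=0$; Part $(I)$ then pins the solution to $\tilde\nu_r=r\tilde h_r$, $\tilde\theta=\tilde\theta_r$, and $\tilde\psi=a\tilde\psi_r$ with $\tilde\psi_r=\tilde\beta_r\phi+r\tilde z_r$ and $a$ a nonzero scalar. Finally, the remark recorded after \eqref{eigendual}, that a solution of \eqref{eigendual} at $(\tilde\nu,\tilde\theta)$ gives $\tilde\Delta(r,i\tilde\nu,\tilde\tau_n)\tilde\psi=0$ precisely for $\tilde\tau_n=(\tilde\theta+2n\pi)/\tilde\nu$ with $n=0,1,2,\dots$, yields the parametrization \eqref{pardual} and its converse, exactly as in Theorem \ref{c25}. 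The only step requiring genuine attention, and the main though routine obstacle, is verifying that the transposition of the kernel and the sign change in the imaginary terms leave $\tilde T$ bijective and leave the base point $(\tilde\beta_0,\tilde\theta_0,\tilde h_0)=(1,\pi/2,h_0)$ nondegenerate; once this is confirmed, the remaining estimates are word-for-word repetitions of the direct case.
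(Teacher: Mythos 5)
Your proposal is correct and is exactly what the paper intends: the paper's own ``proof'' of Theorem \ref{chadual} is the single remark that it ``can be proved similarly as in Theorems \ref{cha} and \ref{c25},'' and your adaptation (implicit function theorem at the base point supplied by Lemma \ref{l25dual}, with $\epsilon$ and $\vartheta$ covering the kernel direction via the nonzero projections, followed by the Lemma \ref{nu}/Lemma \ref{lem21} precompactness argument for uniqueness and the translation back to $\tilde\Delta(r,i\tilde\nu,\tilde\tau_n)\tilde\psi=0$) is precisely that adaptation. The sign flips from $e^{-i\theta}\mapsto e^{i\tilde\theta}$ and $-ih\mapsto +i\tilde h$, and the kernel transposition $K(x,y)\mapsto K(y,x)$, are handled correctly, so nothing is missing.
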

For later application, we give a remark on $(\tilde h_r,\tilde\theta_r,\tilde\nu_r)$.
\begin{remark}\label{redual}
By the arguments similar to Remark 2.8 of \cite{Chen2012}, we see that $h_{r}=\tilde{h}_{r}$,
$\theta_{r}=\tilde{\theta}_{r}$, $\nu_r=\tilde\nu_r$ and $\tau_n=\tilde\tau_n$. Therefore, in the following, we will always use $(h_{r},\theta_{r},
\nu_{r},\tau_n)$ instead of the ones with tilde. Moreover, we remark that the corresponding solution $\psi_\la$ of $\Delta(r,i\nu_{r},\tau_{n})\psi=0$ may be different from $\tilde\psi$.
\end{remark}
Now, we show that $i\nu_r$ is simple.
\begin{theorem}\label{thm34a}
Assume that
$r\in(0,r_2]$. Then $\mu=i\nu_r$ is a simple
eigenvalue of $A_{\tau_{n}}(r)$ for $n=0,1,2,\cdots$, where $i\nu_r$ and $\tau_n$ are defined as in Theorem \ref{c25}.
\end{theorem}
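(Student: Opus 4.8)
The plan is to prove simplicity in two stages: first the geometric statement $\dim\mathscr{N}(A_{\tau_n}(r)-i\nu_r)=1$, and then the algebraic statement $\mathscr{N}((A_{\tau_n}(r)-i\nu_r)^2)=\mathscr{N}(A_{\tau_n}(r)-i\nu_r)$. For the geometric part, recall that $A_{\tau_n}(r)\Phi=\dot\Phi$, so any $\Phi$ in the kernel of $A_{\tau_n}(r)-i\nu_r$ satisfies $\dot\Phi=i\nu_r\Phi$, hence $\Phi(\theta)=e^{i\nu_r\theta}\Phi(0)$ with $\Phi(0)\in X_{\mathbb C}$, and the domain condition forces $\Delta(r,i\nu_r,\tau_n)\Phi(0)=0$. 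By Theorem \ref{c25} every such $\Phi(0)$ is a scalar multiple of $\psi_r$, so the eigenfunction $\Psi(\theta)=e^{i\nu_r\theta}\psi_r$ spans the kernel and the geometric multiplicity is exactly one.

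For the algebraic part, suppose $\Phi\in\mathscr{N}((A_{\tau_n}(r)-i\nu_r)^2)$. Then $(A_{\tau_n}(r)-i\nu_r)\Phi$ lies in the one-dimensional kernel, so $(A_{\tau_n}(r)-i\nu_r)\Phi=c\Psi$ for some $c\in\mathbb C$, and it suffices to show $c=0$. Reading $A_{\tau_n}(r)\Phi=\dot\Phi$, this is the linear ODE $\dot\Phi(\theta)-i\nu_r\Phi(\theta)=c\,e^{i\nu_r\theta}\psi_r$, whose solution is $\Phi(\theta)=e^{i\nu_r\theta}(\Phi(0)+c\theta\psi_r)$. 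Substituting $\dot\Phi(0)=i\nu_r\Phi(0)+c\psi_r$ and $\Phi(-\tau_n)=e^{-i\nu_r\tau_n}(\Phi(0)-c\tau_n\psi_r)$ into the boundary condition defining $\mathscr{D}(A_{\tau_n}(r))$ and collecting terms, I expect to reach
\[
\Delta(r,i\nu_r,\tau_n)\Phi(0)=-c\,\f{\partial\Delta}{\partial\mu}(r,i\nu_r,\tau_n)\psi_r,
\]
where from \eqref{triangle} one reads off $\f{\partial\Delta}{\partial\mu}(r,i\nu_r,\tau_n)\psi_r=r\tau_n u_r e^{-i\nu_r\tau_n}\int_0^L K(x,y)e^{\alpha y}\psi_r(y)dy-\psi_r$. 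By the duality identity \eqref{dual}, the operators $e^{\alpha x}\Delta(r,i\nu_r,\tau_n)$ and $\tilde\Delta(r,i\nu_r,\tau_n)$ are adjoint; since $\Delta(r,i\nu_r,\tau_n)$ is a compact perturbation of $e^{-\alpha x}P_0-i\nu_r$ and hence Fredholm of index zero, the Fredholm alternative says the equation above is solvable in $\Phi(0)$ only if its right-hand side is orthogonal (after multiplication by $e^{\alpha x}$) to the adjoint eigenfunction $\tilde\psi_r$ from Theorem \ref{chadual}. This gives $c\,S_n(r)=0$, where
\[
S_n(r):=\left\langle\tilde\psi_r,\ e^{\alpha x}\f{\partial\Delta}{\partial\mu}(r,i\nu_r,\tau_n)\psi_r\right\rangle,
\]
so everything reduces to proving $S_n(r)\neq0$.

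The main obstacle is precisely this nonvanishing, which I would settle by a perturbation argument as $r\to0$. Using $\nu_r\tau_n=\theta_r+2n\pi$ one writes $e^{-i\nu_r\tau_n}=e^{-i\theta_r}$ and $r\tau_n=(\theta_r+2n\pi)/h_r$, so that $S_n(r)$ depends on $r$ only through the continuously differentiable quantities $r\tau_n$, $e^{-i\theta_r}$, $u_r$, $\psi_r$, $\tilde\psi_r$ (Theorems \ref{cha}, \ref{chadual} and Remark \ref{redual}); in particular $S_n$ extends continuously to $r=0$ even though $\tau_n\to\infty$. As $r\to0$ we have $\psi_r\to c_0$, $\tilde\psi_r\to c_0$, $u_r\to c_0$, $\theta_r\to\pi/2$ so $e^{-i\theta_r}\to-i$, and $r\tau_n\to(\pi/2+2n\pi)/h_0$. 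Evaluating the two terms of $S_n(r)$ with the weighted inner product $\langle u,v\rangle=\int_0^L\overline u\,v\,dx$, and invoking $c_0\int_0^L\int_0^L K(x,y)e^{\alpha x+\alpha y}dxdy=\int_0^L m(x)e^{\alpha x}dx$ together with $h_0=\int_0^L m(x)e^{\alpha x}dx/\int_0^L e^{\alpha x}dx$ from \eqref{ovec} and \eqref{lastar}, I anticipate
\[
\lim_{r\to0}S_n(r)=-c_0^2\left(\int_0^L e^{\alpha x}dx\right)\left[1+i\left(\f{\pi}{2}+2n\pi\right)\right],
\]
whose real part is strictly negative and in particular nonzero. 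By continuity of $r\mapsto S_n(r)$ it follows that $S_n(r)\neq0$ for all $r\in(0,r_2]$ after possibly shrinking $r_2$, forcing $c=0$ and hence $\mathscr{N}((A_{\tau_n}(r)-i\nu_r)^2)=\mathscr{N}(A_{\tau_n}(r)-i\nu_r)$. The delicate step is the bookkeeping in this limit, keeping careful track of the $e^{\alpha x}$ weight and the leading-order expansions of $\psi_r$ and $\tilde\psi_r$; the reduction preceding it is the standard computation for eigenvalues of a delay generator.
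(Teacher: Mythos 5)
Your proposal is correct and follows essentially the same route as the paper's proof: the same characterization of the kernel, the same ODE computation producing $\Delta(r,i\nu_r,\tau_n)\Phi(0)=c\bigl(\psi_r-r\tau_n e^{-i\theta_r}u_r\int_0^L K(x,y)e^{\alpha y}\psi_r(y)dy\bigr)$, the same pairing with the adjoint eigenfunction $\tilde\psi_r$ to get $c\,S_n(r)=0$, and the same $r\to0$ limit (your $S_n$ is the negative of the paper's, which is immaterial) showing $S_n(r)\ne 0$ for small $r$. The only cosmetic differences are your Fredholm-alternative phrasing of what the paper does by direct integration against $\tilde\psi_r$ via the duality identity, and writing the residual term as $\partial\Delta/\partial\mu$ applied to $\psi_r$.
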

\begin{proof}
From Theorem \ref{c25}, we obtain that $\mathscr{N}[A_{\tau_{n}}
(r)-i\nu_r]=\text{Span}[e^{i\nu_r\theta}\psi_r]$, where $\theta\in[-\tau_n,0]$ and $\psi_r$ is defined as in
Theorem \ref{c25}.
If
$\phi_1\in\mathscr{N}[A_{\tau_{n}}
(r)-i\nu_r]^2$,
then
$$
[A_{\tau_{n}}
(r)-i\nu_r]\phi_1\in\mathscr{N}[A_{\tau_{n}}(r)-i\nu_r]=
\text{Span}[e^{i\nu_r\theta}\psi_r],
$$
which implies that there exists a constant $a$ such that
$$
[A_{\tau_{n}}
(r)-i\nu_r]\phi_1=ae^{i\nu_r\theta}\psi_r.
$$
It follows that
\begin{equation}
 \label{eq32}
\begin{split}
\dot{\phi_1}(\theta)&=i\nu_r\phi_1(\theta)+ae^{i\nu_r\theta}\psi_r,
\ \ \ \ \theta\in[-\tau_{n},0], \\
 \dot{\phi_1}(0)&=
 e^{-\alpha x}P_0\phi_1(0)+r \Tilde K(r)\phi_1(0)-r u_{r}\int_0^L K(x,y)e^{\alpha y}\phi_1(-\tau_n)(y)dy.
 \end{split}
 \end{equation}
The first equation of Eq. \eqref{eq32} yields
\begin{equation}
\label{eq33}
\begin{split}
\phi_1(\theta)&=\phi_1(0)e^{i\nu_r\theta}+a\theta
e^{i\nu_r\theta}\psi_r,\\
\dot{\phi_1}(0)&=i\nu_r\phi_1(0)+a\psi_r.
\end{split}
\end{equation}
Then, it follows from Eqs. \eqref{eq32} and \eqref{eq33} that
\begin{equation}\label{provesimple}
\begin{split}
&e^{\alpha x}\Delta(r,i\nu_r,\tau_{n})\phi_1(0)\\=&P_0\phi_1(0)-i\nu_r e^{\alpha x}\psi_1(0)+re^{\alpha x}\Tilde K(r)\phi_1(0)-r e^{-i\theta_r}u_r\int_0^L K(x,u) e^{\alpha y}\phi_1(0)(y)dy
\\=&ae^{\alpha x}\left(\psi_{r}-re^{-i\theta_{r}}\tau_{n}u_r\int_0^L K(x,y)e^{\alpha y}\psi_{r}(y)dy\right).
\end{split}\end{equation}
Multiplying the above equation by $\overline{\tilde\psi_r}(x)$ and integrating the result over $(0,L)$, we see from
Eq. \eqref{dual} and Remark \ref{redual} that
\begin{equation}\label{sn}
\begin{split}
0&=\left\langle\tilde\Delta(r,i\tilde\nu,\tilde\tau_{n})\tilde\psi_r,\phi_1(0)\right\rangle=\left\langle \tilde\Delta(r,i\nu,\tau_{n})\tilde\psi_r,\phi_1(0)\right\rangle
=\left\langle\tilde\psi_r,e^{\alpha x}\Delta(r,i\nu,\tau_{n})\phi_1(0)\right\rangle\\
&=a\left(\int_0^Le^{\alpha x}\overline{\tilde\psi_r}\psi_rdy-r\tau_{n}e^{-i\theta_r}\int_0^L\int_0^Lu_r(x)
K(x,y) e^{\alpha x+\alpha y}\overline{\tilde\psi_r}(x)\psi_{r}(y)
dxdy\right)\\&:=aS_{n}(r).
\end{split}\end{equation}
It follows from Theorems \ref{cha}, \ref{c25} and \ref{chadual} that $\theta_{r}\to \pi/2$, $r\tau_n\to (\ds\f{\pi}{2}+2n\pi)$, $\psi_r,\tilde \psi_r\to c_0$ in $X_{\mathbb{C}}$ as $r\to0$. Therefore,
\begin{equation}\label{eq31}
\lim_{r\to0}S_n(r)=c_0^2\left[1+i\left(\frac{\pi}{2}+2n\pi\right)\right]\int_{0}^Le^{\alpha x}dx\ne0,
\end{equation}
which yields
$a=0$. Therefore,
$$
\mathscr{N}[A_{\tau_{n}}(r)-i\nu_r]^j
=\mathscr{N}[A_{\tau_{n}}(r)-i\nu_r],\;\;j=
2,3,\cdots,\;\; n=0,1,2,\cdots,
$$
and $\mu=i\nu_r$ is a simple eigenvalue of
$A_{\tau_{n}}$ for $n=0,1,2,\cdots.$
\end{proof}
Noticing that $\mu=i\nu_{r}$ is a simple eigenvalue of $A_{\tau_{n}}$, from
the implicit function theorem, we see that there
are a neighborhood $O_{n}\times D_{n}\times
H_{n}\subset\mathbb{R}\times\mathbb{C}\times X_{\mathbb{C}}$ of
$(\tau_{n},i\nu_r,\psi_r)$ and a continuously
differential function $(\mu(\tau),\psi(\tau)):O_{n}\rightarrow D_{n}\times
H_{n}$ such that $
\mu(\tau_{n})=i\nu_r$, $\psi(\tau_{n})=\psi_r$, and for each $\tau\in O_{n}$, the only eigenvalue of
$A_\tau(r)$ in $D_{n}$ is $\mu(\tau),$ and
\begin{equation}\label{eq34}
\begin{split}
&e^{\alpha x}\Delta(r,\mu(\tau),\tau)\psi(\tau)=P_0\psi(\tau)+re^{\alpha x}\Tilde K(r)\psi(\tau)-\mu(\tau) e^{\alpha x} \psi(\tau) \\
&-ru_re^{\alpha x}\int_0^LK(x,y)e^{\alpha y}\psi(\tau)(y)dye^{-\mu(\tau)\tau}=0.
\end{split}
\end{equation}
A direct calculation can lead to the transversality
condition, and here we omit the proof.
\begin{theorem}\label{thm35}
For
$r\in(0,r_2]$,
$
\frac{d{\mathcal Re}[\mu(\tau_{n})]} {d\tau}>0$, $n=0,1,2,\cdots$.
\end{theorem}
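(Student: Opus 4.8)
The plan is to differentiate the defining relation \eqref{eq34} for the perturbed eigenpair $(\mu(\tau),\psi(\tau))$ with respect to $\tau$, evaluate at $\tau=\tau_n$, and then pair the result against the adjoint eigenfunction $\tilde\psi_r$ so that the duality \eqref{dual} annihilates the leading operator. Writing $'=d/d\tau$ and abbreviating $\mu=\mu(\tau)$, $\psi=\psi(\tau)$, I differentiate \eqref{eq34} and set $\tau=\tau_n$, where $\mu=i\nu_r$, $\psi=\psi_r$, and $e^{-\mu\tau_n}=e^{-i\theta_r}$ because $\nu_r\tau_n=\theta_r+2n\pi$. All the $\psi'$ terms collect into $e^{\alpha x}\Delta(r,i\nu_r,\tau_n)\psi'$, leaving
$$e^{\alpha x}\Delta(r,i\nu_r,\tau_n)\psi'=\mu' e^{\alpha x}\psi_r+r u_r e^{\alpha x}e^{-i\theta_r}\left(-\mu'\tau_n-i\nu_r\right)\int_0^L K(x,y)e^{\alpha y}\psi_r(y)\,dy.$$
Applying $\langle\tilde\psi_r,\cdot\rangle$ and invoking \eqref{dual} together with $\tilde\Delta(r,i\nu_r,\tau_n)\tilde\psi_r=0$ makes the left-hand side vanish. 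The right-hand side reproduces exactly the two integrals that define $S_n(r)$ in \eqref{sn}, so solving for $\mu'$ yields a denominator equal precisely to $S_n(r)$:
$$\mu'(\tau_n)=\f{i\nu_r e^{-i\theta_r}B_n(r)}{S_n(r)},\qquad B_n(r):=r\int_0^L\int_0^L u_r(x)K(x,y)e^{\alpha x+\alpha y}\overline{\tilde\psi_r}(x)\psi_r(y)\,dxdy.$$

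Since $\nu_r=rh_r>0$, the sign of $d\,{\mathcal Re}[\mu(\tau_n)]/d\tau$ equals the sign of ${\mathcal Re}\bigl[i e^{-i\theta_r}B_n(r)/S_n(r)\bigr]$, which I would pin down by letting $r\to0$. Here lies the main subtlety: $\nu_r\to0$ and $B_n(r)=O(r)$, so $\mu'(\tau_n)\to0$ and a naive limit is uninformative; one must extract the correct power of $r$. Using $u_r,\psi_r,\tilde\psi_r\to c_0$ in $X_{\mathbb C}$ from Theorems \ref{cha}, \ref{c25} and \ref{chadual}, I get $B_n(r)/r\to c_0^3\int_0^L\int_0^L K(x,y)e^{\alpha x+\alpha y}\,dxdy$, which by \eqref{ovec} equals $c_0^2\int_0^L m(x)e^{\alpha x}\,dx$; meanwhile $\theta_r\to\pi/2$ gives $ie^{-i\theta_r}\to1$, and $S_n(r)\to c_0^2\bigl[1+i(\pi/2+2n\pi)\bigr]\int_0^L e^{\alpha x}\,dx$ by \eqref{eq31}. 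Writing $\mu'(\tau_n)=r^2h_r\cdot ie^{-i\theta_r}(B_n(r)/r)/S_n(r)$ and using $\int_0^L m e^{\alpha x}=h_0\int_0^L e^{\alpha x}$ from \eqref{lastar}, this produces
$$\lim_{r\to0}\f{1}{r^2}\,{\mathcal Re}[\mu'(\tau_n)]=h_0\,{\mathcal Re}\!\left[\f{h_0}{1+i(\pi/2+2n\pi)}\right]=\f{h_0^2}{1+(\pi/2+2n\pi)^2}>0,$$
where $h_0>0$ by assumption $(\mathbf{A}_1)$.

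Because this scaled limit is strictly positive and the maps $r\mapsto(z_r,\beta_r,h_r,\theta_r)$ and $r\mapsto S_n(r)$ are continuous, ${\mathcal Re}[\mu'(\tau_n)]>0$ for all sufficiently small $r$; shrinking $r_2$ if necessary (as the statement permits) gives the transversality condition for every $r\in(0,r_2]$ and every $n$. The principal obstacle is exactly this degeneracy as $r\to0$: since the Hopf frequency $\nu_r$ itself vanishes in the singular limit, the transversality derivative is only $O(r^2)$, so one cannot pass to the limit directly but must track the exact orders of $B_n(r)$ and $S_n(r)$ separately. A secondary point demanding care is checking that the $\mu'$ contributions reassemble into the denominator $S_n(r)$ with the precise $-\tau_n e^{-i\theta_r}$ weighting of \eqref{sn}, which is where the advection-weighted pairing $\langle\tilde\psi_r,e^{\alpha x}(\cdot)\rangle$ and the adjoint operator $\tilde\Delta$ must be used consistently.
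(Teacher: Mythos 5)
Your derivation of the key identity is correct, and it actually supplies a proof that the paper omits entirely (the paper says only that ``a direct calculation'' gives the transversality condition). Differentiating \eqref{eq34}, collecting the $\psi'$ terms into $e^{\alpha x}\Delta(r,i\nu_r,\tau_n)\psi'$, and pairing against $\tilde\psi_r$ via \eqref{dual} is exactly the route of \cite{busenberg1996stability,Chen2012}, and your formula
\begin{equation*}
\mu'(\tau_n)=\f{i\nu_r e^{-i\theta_r}B(r)}{S_n(r)},\qquad
B(r)=r\int_0^L\!\!\int_0^L u_r(x)K(x,y)e^{\alpha x+\alpha y}\overline{\tilde\psi_r}(x)\psi_r(y)\,dxdy,
\end{equation*}
is right (note that $B$ does not depend on $n$). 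The genuine gap is in your final step. The scaled limit $\lim_{r\to0}r^{-2}{\mathcal Re}[\mu'(\tau_n)]=h_0^2/\bigl(1+(\pi/2+2n\pi)^2\bigr)$ is computed for each \emph{fixed} $n$, and it decays to $0$ as $n\to\infty$; so the threshold it produces is an $r_2(n)$ that may shrink to $0$ with $n$. The theorem asserts a single $r_2$ valid for all $n$ simultaneously (and Theorem \ref{thm36} needs transversality at every $\tau_n$ for one fixed $r$), so ``shrinking $r_2$ if necessary'' cannot be invoked once per $n$, infinitely often. Your pointwise-in-$n$ limit argument cannot close this.

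The fix is short, and it is precisely why the classical proofs pass to the reciprocal. Write $C(r)=\int_0^L e^{\alpha x}\overline{\tilde\psi_r}\psi_r\,dx$ and $w=i\nu_re^{-i\theta_r}B(r)$; then $-\tau_ne^{-i\theta_r}B(r)=i(\tau_n/\nu_r)w$, so $S_n(r)=C(r)+i(\tau_n/\nu_r)w$ and, using $S_n(r)\ne0$ from Theorem \ref{thm34a},
\begin{equation*}
{\mathcal Re}[\mu'(\tau_n)]
=\f{{\mathcal Re}\bigl[w\,\overline{S_n(r)}\bigr]}{|S_n(r)|^2}
=\f{{\mathcal Re}\bigl[w\,\overline{C(r)}\bigr]}{|S_n(r)|^2},
\end{equation*}
because ${\mathcal Re}\bigl[-i(\tau_n/\nu_r)|w|^2\bigr]=0$: the entire $n$-dependence of the numerator is purely imaginary. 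Hence the sign of ${\mathcal Re}[\mu'(\tau_n)]$ is governed by the $n$-independent quantity ${\mathcal Re}\bigl[ie^{-i\theta_r}B(r)\overline{C(r)}\bigr]$, and your own limits ($ie^{-i\theta_r}\to1$, $B(r)/r\to c_0^2\int_0^L m e^{\alpha x}dx>0$, $C(r)\to c_0^2\int_0^Le^{\alpha x}dx>0$) show this is positive for all sufficiently small $r$, with a threshold independent of $n$. Equivalently, one checks ${\mathcal Re}\bigl[1/\mu'(\tau_n)\bigr]>0$, where the $\tau_n$ term enters as the purely imaginary summand $i\tau_n/\nu_r$; this is the device used in \cite{busenberg1996stability,Chen2012}. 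With this one replacement, your argument is complete.
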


Then, from Theorems \ref{c25}, \ref{nodelay1}, \ref{thm34a}
and \ref{thm35}, we obtain the distribution of eigenvalues of $A_\tau(r)$.
 \begin{theorem}\label{thm36}
For
$r\in(0,r_2]$, the infinitesimal generator
$A_\tau(r)$ has exactly $2(n+1)$ eigenvalues with positive
real parts when $\tau\in(\tau_{n},\tau_{n+1}],\
n=0,1,2,\cdots.$
\end{theorem}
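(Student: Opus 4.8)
The plan is to run the standard eigenvalue-crossing count, using Theorems \ref{nodelay1}, \ref{c25}, \ref{thm34a} and \ref{thm35} as the four ingredients. First I would fix $r\in(0,r_2]$ and track the quantity $N(\tau)$, the number of eigenvalues of $A_\tau(r)$ (counted with algebraic multiplicity) lying in the open right half-plane $\{{\mathcal Re}\,\mu>0\}$. Since $e^{-\alpha x}P_0$ has compact resolvent, the generator $A_\tau(r)$ has discrete spectrum consisting of isolated eigenvalues of finite algebraic multiplicity, and these depend continuously on $\tau$; hence $N(\tau)$ is locally constant away from the $\tau$-values at which an eigenvalue sits on the imaginary axis. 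By Theorem \ref{nodelay1}, $N(0)=0$, which is the base point of the count.

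Second, I would pin down the crossing set. By Theorem \ref{c25}, a purely imaginary eigenvalue $i\nu$ with $\nu>0$ occurs only when $\nu=\nu_r$ and $\tau=\tau_n=(\theta_r+2n\pi)/\nu_r$ for some $n\ge0$; since the coefficients are real, the conjugate $-i\nu_r$ is then also an eigenvalue. Moreover $\mu=0$ is never an eigenvalue for $\tau\ge0$: the condition $\Delta(r,0,\tau)\psi=0$ is independent of $\tau$ (the factor $e^{-\mu\tau}$ equals $1$ at $\mu=0$), and it already fails at $\tau=0$ by Theorem \ref{nodelay1}, so it fails for all $\tau$. Thus the only imaginary-axis crossings happen along the strictly increasing sequence $\tau_0<\tau_1<\tau_2<\cdots$, and at each $\tau_n$ the only purely imaginary eigenvalues are the pair $\pm i\nu_r$, each algebraically simple (simplicity of $i\nu_r$ by Theorem \ref{thm34a}, and of $-i\nu_r$ by conjugation; uniqueness among crossings with positive imaginary part because $\tau_n=\tau_m$ forces $n=m$).

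Third, I would compute the jump of $N$ at each $\tau_n$. Since $i\nu_r$ is a simple eigenvalue of $A_{\tau_n}(r)$, the implicit function theorem gives a smooth branch $\mu(\tau)$ with $\mu(\tau_n)=i\nu_r$, and Theorem \ref{thm35} provides the transversality condition $\frac{d{\mathcal Re}[\mu(\tau_n)]}{d\tau}>0$. Hence, as $\tau$ increases through $\tau_n$, this eigenvalue together with its conjugate crosses from the left half-plane into the right half-plane, and no eigenvalue crosses in the reverse direction. Therefore $N$ increases by exactly $2$ at each $\tau_n$ and is constant on each interval $(\tau_n,\tau_{n+1}]$; combined with $N(0)=0$ this gives $N(\tau)=2(n+1)$ for $\tau\in(\tau_n,\tau_{n+1}]$, which is the assertion.

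The main obstacle is justifying that $N(\tau)$ changes only through the transversal crossings at the $\tau_n$, i.e. that no eigenvalue escapes to or enters from infinity as $\tau$ varies. To close this I would prove a uniform-in-$\tau$ a priori bound: multiplying $\Delta(r,\mu,\tau)\psi=0$ by $e^{\alpha x}\overline\psi$, integrating over $(0,L)$, and using $\langle\psi,P_0\psi+re^{\alpha x}\Tilde K(r)\psi\rangle\le0$ together with $|e^{-\mu\tau}|\le1$ for ${\mathcal Re}\,\mu\ge0$ and $\tau\ge0$ (exactly as in Lemma \ref{nu}), one confines every eigenvalue with ${\mathcal Re}\,\mu\ge0$ to a bounded region independent of $\tau$. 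This keeps the number of eigenvalues in the closed right half-plane finite and shows it can change only by the crossings counted above, completing the argument.
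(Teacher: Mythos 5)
Your proof is correct and takes essentially the same route as the paper, which states Theorem \ref{thm36} as a direct consequence of Theorems \ref{c25}, \ref{nodelay1}, \ref{thm34a} and \ref{thm35} without writing out the crossing count. Your version simply fills in the standard bookkeeping the paper leaves implicit: excluding $\mu=0$ as an eigenvalue, the uniform-in-$\tau$ a priori bound (Lemma \ref{nu} style) preventing eigenvalues from entering through infinity, and the argument that simplicity plus transversality forces every imaginary-axis crossing to go from left to right.
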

Finally, we obtain the stability of the positive steady state $u_{r}$, and the existence of the associated Hopf bifurcation. We remark that the Hopf bifurcation theorem for general PFDEs was proved in \cite{wu1996theory}.
\begin{theorem}\label{thm37}For $r\in(0,r_2]$,
the positive steady state $u_r$ obtained in Theorem \ref{Tsteady}
is locally asymptotically stable when $\tau\in[0,\tau_{0})$, and
unstable when $\tau\in(\tau_{0},\infty)$. Moreover, when $\tau=\tau_n$,
$(n=0,1,2,\cdots)$, system \eqref{delay} occurs Hopf bifurcation at the positive steady state $u_{r}$.
\end{theorem}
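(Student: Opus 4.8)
The plan is to assemble the spectral information gathered in Theorems \ref{nodelay1}, \ref{c25}, \ref{thm34a}, \ref{thm35} and \ref{thm36} and feed it into the principle of linearized stability and the Hopf bifurcation theorem for partial functional differential equations from \cite{wu1996theory}. Since $e^{-\alpha x}P_0$ generates an analytic semigroup with compact resolvent, the solution semigroup of the linearization \eqref{linear} is eventually compact, so the spectrum of the infinitesimal generator $A_\tau(r)$ is a discrete set of eigenvalues of finite multiplicity, only finitely many of which lie in any right half-plane, and the stability of $u_r$ is governed entirely by the location of this spectrum. Thus the whole statement reduces to tracking how eigenvalues of $A_\tau(r)$ cross the imaginary axis as $\tau$ increases.

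For the stability assertion on $[0,\tau_0)$, I would show that no eigenvalue of $A_\tau(r)$ enters the closed right half-plane for $\tau\in[0,\tau_0)$. At $\tau=0$ this is precisely Theorem \ref{nodelay1}. Because the eigenvalues vary continuously with $\tau$ and only finitely many can sit in a right half-plane, an eigenvalue can migrate to the right only by crossing the imaginary axis. A crossing through a point $i\nu$ with $\nu>0$ is, by the characterization in Theorem \ref{c25}, possible only when $\nu=\nu_r$ and $\tau=\tau_n$, the smallest such value being $\tau_0$. A crossing through $\mu=0$ is excluded because $\Delta(r,0,\tau)$ is independent of $\tau$ (the factor $e^{-\mu\tau}$ equals $1$ at $\mu=0$), so $0$ would already be an eigenvalue at $\tau=0$, contradicting Theorem \ref{nodelay1}. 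Hence the spectrum stays in the open left half-plane on $[0,\tau_0)$ and $u_r$ is locally asymptotically stable there. For the instability on $(\tau_0,\infty)$, I would simply invoke Theorem \ref{thm36}: for $\tau\in(\tau_n,\tau_{n+1}]$ the generator $A_\tau(r)$ has exactly $2(n+1)\ge2$ eigenvalues with positive real part, so at least one eigenvalue lies in the open right half-plane for every $\tau>\tau_0$, whence $u_r$ is unstable.

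For the Hopf bifurcation at each $\tau=\tau_n$, I would verify the hypotheses of the theorem in \cite{wu1996theory}. Theorem \ref{thm34a} provides that $i\nu_r$ is a simple eigenvalue of $A_{\tau_n}(r)$; by the conjugate symmetry of the real problem $-i\nu_r$ is simple as well, and Theorem \ref{c25} together with the $\mu=0$ argument above shows that $\pm i\nu_r$ are the only eigenvalues on the imaginary axis at $\tau=\tau_n$ (no resonance $ik\nu_r$ with $|k|\ne1$ can occur, since $\nu_r$ is the only admissible positive imaginary part). The transversality $\frac{d{\mathcal Re}[\mu(\tau_n)]}{d\tau}>0$ is Theorem \ref{thm35}. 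With simplicity, isolation and transversality established, the Hopf bifurcation theorem yields a branch of small-amplitude nonconstant periodic solutions of \eqref{delay} bifurcating from $u_r$ at each $\tau=\tau_n$, which completes the proof.

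The genuinely delicate points are the exclusion of a zero-eigenvalue crossing and the non-resonance verification, because a priori an eigenvalue could slide through the origin or a second purely imaginary eigenvalue could appear exactly at $\tau_n$ and spoil both the stability switch and the applicability of the Hopf theorem. I expect both to be the main obstacle in principle, yet both are settled cleanly by the explicit characterization in Theorem \ref{c25} (which pins the admissible $\nu$ to the single value $\nu_r$) and by the $\tau$-independence of $\Delta(r,0,\tau)$, so that no computation beyond the already-assembled lemmas is required.
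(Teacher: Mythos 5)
Your proposal is correct and follows essentially the same route as the paper, which states Theorem \ref{thm37} without a written proof precisely because it is the direct assembly of Theorems \ref{nodelay1}, \ref{c25}, \ref{thm34a}, \ref{thm35} and \ref{thm36} with the Hopf bifurcation theorem for PFDEs from \cite{wu1996theory}. Your additional care about excluding a crossing through $\mu=0$ (via the $\tau$-independence of $\Delta(r,0,\tau)$) and about non-resonance makes explicit the details the paper leaves implicit, and both are handled correctly.
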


\section{The properties of the Hopf bifurcation}

In this section, we obtain the direction of the Hopf bifurcation of Eq. \eqref{delay}
and the stability of the bifurcating periodic solutions, the methods used are motivated by
\cite{faria2001normal,faria2002normal,faria2002smoothness,hassard1981theory}. Here, unless otherwise specified,
we also assume $r\in(0,r_2]$ throughout this section, and the value of $r_2$ may be chosen smaller than the one
in Section 2, since further perturbation arguments are also used.
Letting $U(t)=u(\cdot,t)-u_r$, $t=\tau\tilde t$,
$\tau=\tau_n+\gamma$, and dropping the tilde sign, system \eqref{delay} can be transformed as follows:
\begin{equation}\label{ab}
\ds\frac{dU(t)}{dt}=\tau_n e^{-\alpha x}P_0 U(t)+\tau_nP_1U_t+J(U_t,\gamma),
\end{equation}
where $U_t\in \mathcal{C}=C([-1,0],Y)$, $P_0$ is defined as in Eq. \eqref{LL}, and
\begin{equation*}
\begin{split}
&P_1U_t:=r\Tilde K(r)U(t)-ru_r\int_0^L K(x,y) e^{\alpha y} U(t-1)(y)dy,\\
 &J(U_t,\gamma):=\gamma  e^{-\alpha x}P_0U_t+\gamma
P_1U_t-(\gamma+\tau_n)r U(t)\int_0^L K(x,y)e^{\alpha y}U(t-1)(y)dy.\\
\end{split}
\end{equation*}
Then Eq. \eqref{ab} occurs Hopf bifurcation near the zero equilibrium  when $\gamma=0$. The
linearized equation of \eqref{ab} for $\gamma=0$ is
\begin{equation}\label{lab}
\ds\frac{dU(t)}{dt}=\tau_n e^{-\alpha x}P_0U(t)+\tau_nP_1U_t.
\end{equation}
Denote by
$\mathcal {A}_{\tau_n}$ the infinitesimal generator of the solution semigroup for Eq. \eqref{lab}.
From \cite{wu1996theory}, we have \begin{equation*}\begin{split}
\mathcal{A}_{\tau_n} \Psi=&\dot\Psi,\\
\mathscr{D}(\mathcal{A}_{\tau_n})=&\Big\{\Psi\in \mathcal
{C}_\mathbb{C}
\cap \mathcal {C}^1_\mathbb{C}:\ \Psi(0)\in X_{\mathbb{C}},\dot\Psi(0)=\tau_n e^{-\alpha x}P_0 \Psi(0)+\tau_nP_1 U_t\Big\},
\end{split}\end{equation*}
where $\mathcal{C}^1_\mathbb{C}=C^1([-1,0],Y_\mathbb{C})$,
and the abstract form of Eq. \eqref{ab} is
\begin{equation}\label{abab}
\ds\frac{dU_t}{dt}=\mathcal{A}_{\tau_n}U_t+X_0J(U_t,\gamma),
\end{equation}
where
\begin{equation*}X_0(\theta)=\begin{cases}0, \;\;\; & \theta\in[-1,0),\\
I, \;\;\; &\theta=0.\\
\end{cases}\end{equation*}
In order to compute the normal forms, we need to introduce a weighted inner product for $Y_{\mathbb{C}}$:
$$\langle u,v \rangle_1=\ds\int_{0}^L e^{\alpha x}\overline
u(x) {v}(x) dx\;\;\text{ for }\;\;u,v\in Y_{\mathbb{C}}.$$
Here the weight function is concerned with advection rate $\alpha$, $Y_{\mathbb{C}}$ is also a Hilbert space with this product, and
$$\langle v, v\rangle \le \langle v,v\rangle_1\le e^{\alpha L}\langle v, v\rangle.$$
Following the methods of \cite{faria2002normal,Su2011}, we introduce the formal
duality $\langle\langle\cdot,\cdot\rangle\rangle$ in $\mathcal{C}$  by
\begin{equation}\label{bil}
\langle\langle\tilde\Psi,\Psi\rangle\rangle=\langle
\tilde\Psi(0),\Psi(0)\rangle_1-r\tau_n\int_{-1}^0\left\langle\tilde\Psi(s+1),u_r\int_0^L
K(\cdot,y)e^{\alpha y}\Psi(s)(y)dy\right\rangle_1 ds,
\end{equation}
for $\Psi\in \mathcal{C}_{\mathbb{C}}$ and $\tilde\Psi\in
\mathcal{C}_{\mathbb{C}}^*:= C([0,1],Y_{\mathbb{C}})$.
As in \cite{Hale1971}, we can compute the
formal adjoint operator $\mathcal{A}^*_{\tau_n}$ of $\mathcal{A}_{\tau_n}$ with respect to the formal duality.
We remark that
$\mathcal{A}^*_{\tau_n}$ is referred to as the formal adjoint operator of $\mathcal{A}_{\tau_n}$, if
\begin{equation}\label{Atauadjont}
\langle\langle
\mathcal{A}^*_{\tau_n}\tilde\Psi,\Psi\rangle\rangle=\langle\langle
\tilde\Psi,\mathcal{A}_{\tau_n}\Psi\rangle\rangle
\end{equation}
for any $\Psi\in\mathscr{D}(\mathcal{A}_{\tau_n})$ and $\tilde\Psi\in
\mathscr{D}(\mathcal{A}^*_{\tau_n})$.
\begin{lemma}\label{dualoperator}
The
formal adjoint operator $\mathcal{A}^*_{\tau_n}$ of $\mathcal{A}_{\tau_n}$ is defined by
$$\mathcal{A}^*_{\tau_n}\tilde\Psi(s)=-\dot{\tilde\Psi}(s)$$ with the domain
\begin{equation*}\begin{split}
 \mathscr{D}(\mathcal{A}^*_{\tau_n}) = \Big\{ &\tilde\Psi\in \mathcal{C}^*_\mathbb{C}
\cap (\mathcal{C}^*_\mathbb{C})^1:\tilde\Psi(0)\in X_\mathbb{C},-\dot{\tilde\Psi}(0)=\tau_n e^{-\alpha x}P_0\tilde \Psi(0)\\&+r \tau_n\Tilde K(r)\tilde \Psi(0)-r\tau_n \int_0^L K(y,x) e^{\alpha y} u_{r}(y)\tilde\Psi(1)(y) dy\Big\},
\end{split}\end{equation*}
where $(\mathcal{C}^*_\mathbb{C})^1=C^1([0,1],Y_\mathbb{C})$.
\end{lemma}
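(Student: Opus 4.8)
The plan is to verify directly the defining relation \eqref{Atauadjont}, namely that $\langle\langle\mathcal{A}^*_{\tau_n}\tilde\Psi,\Psi\rangle\rangle=\langle\langle\tilde\Psi,\mathcal{A}_{\tau_n}\Psi\rangle\rangle$ holds for the operator in the statement, and then to read off both the action $\mathcal{A}^*_{\tau_n}\tilde\Psi=-\dot{\tilde\Psi}$ and the domain condition from the bookkeeping. Starting from $\mathcal{A}_{\tau_n}\Psi=\dot\Psi$ and the definition \eqref{bil} of the formal duality, I would write
\[
\langle\langle\tilde\Psi,\mathcal{A}_{\tau_n}\Psi\rangle\rangle=\langle\tilde\Psi(0),\dot\Psi(0)\rangle_1-r\tau_n\int_{-1}^0\left\langle\tilde\Psi(s+1),u_r\int_0^L K(\cdot,y)e^{\alpha y}\dot\Psi(s)(y)\,dy\right\rangle_1 ds.
\]
The first step is to remove the $s$-derivative appearing inside the integral. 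After using Fubini to pull $\dot\Psi(s)(y)$ outside the $x$-integration, an integration by parts in $s$ transfers $\tfrac{d}{ds}$ onto the factor $\tilde\Psi(s+1)$; the resulting interior contribution reassembles \emph{exactly} into $\langle\langle-\dot{\tilde\Psi},\Psi\rangle\rangle$ minus its own boundary term $\langle-\dot{\tilde\Psi}(0),\Psi(0)\rangle_1$. This already identifies the action of the adjoint as $\mathcal{A}^*_{\tau_n}\tilde\Psi=-\dot{\tilde\Psi}$, and reduces the problem to showing that all leftover terms vanish precisely when $\tilde\Psi$ lies in the claimed domain.

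The leftover terms are the two boundary contributions at $s=0$ and $s=-1$ produced by the integration by parts, together with $\langle\tilde\Psi(0),\dot\Psi(0)\rangle_1$ and $\langle\dot{\tilde\Psi}(0),\Psi(0)\rangle_1$. Into the first of these I would substitute the domain condition of $\mathscr{D}(\mathcal{A}_{\tau_n})$, replacing $\dot\Psi(0)$ by $\tau_n e^{-\alpha x}P_0\Psi(0)+\tau_n P_1\Psi$. The crucial cancellation is that the delayed-feedback part $-r u_r\int_0^L K(\cdot,y)e^{\alpha y}\Psi(-1)(y)\,dy$ coming from $P_1\Psi$, once paired with $\tilde\Psi(0)$ in $\langle\cdot,\cdot\rangle_1$ and rewritten by Fubini, cancels the $s=-1$ boundary term with the correct sign. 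To dispose of the remaining $P_0$ and $\Tilde K(r)$ pieces I would invoke two self-adjointness facts with respect to the weighted product $\langle\cdot,\cdot\rangle_1$: first, $e^{-\alpha x}P_0$ is self-adjoint on $X_{\mathbb C}$ — this is exactly where the weight $e^{\alpha x}$ is indispensable, since integrating $\partial_x(e^{\alpha x}\partial_x)$ by parts twice makes the boundary terms drop because $\Psi(0),\tilde\Psi(0)$ satisfy $\psi_x(0)=\psi_x(L)=0$; second, $\Tilde K(r)$ is a real multiplication operator and hence self-adjoint. These move $e^{-\alpha x}P_0$ and $\Tilde K(r)$ off $\Psi(0)$ and onto $\tilde\Psi(0)$.

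It then remains to interpret the $s=0$ boundary term. After Fubini and relabeling the integration variable, it becomes $\langle W,\Psi(0)\rangle_1$ with $W=-r\tau_n\int_0^L K(y,\cdot)e^{\alpha y}u_r(y)\tilde\Psi(1)(y)\,dy$; note that the kernel appears transposed as $K(y,x)$, which is the source of the transpose in the stated domain. Collecting everything, \eqref{Atauadjont} holds if and only if
\[
\left\langle \tau_n e^{-\alpha x}P_0\tilde\Psi(0)+r\tau_n\Tilde K(r)\tilde\Psi(0)+\dot{\tilde\Psi}(0)+W,\ \Psi(0)\right\rangle_1=0
\]
for all admissible $\Psi$. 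Since $\Psi(0)$ ranges over a dense subset of $Y_{\mathbb C}$, the bracketed element must vanish, and solving for $-\dot{\tilde\Psi}(0)$ gives exactly the boundary condition defining $\mathscr{D}(\mathcal{A}^*_{\tau_n})$.

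I expect the main obstacle to be purely the bookkeeping: tracking the complex conjugates in $\langle\cdot,\cdot\rangle_1$ through each Fubini exchange, checking that the feedback term from $P_1\Psi$ matches the $s=-1$ boundary term in both sign and magnitude, and confirming that the $s=0$ term genuinely yields the transposed kernel $K(y,x)$ rather than $K(x,y)$. I would also need to verify carefully that the boundary terms in the self-adjointness of $e^{-\alpha x}P_0$ vanish, which forces the requirement $\tilde\Psi(0)\in X_{\mathbb C}$ that appears in the claimed domain, and to justify the final density argument upgrading the scalar identity to the operator/domain identity. Everything else is routine.
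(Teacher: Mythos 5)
Your proposal is correct and follows essentially the same route as the paper's proof: expand $\langle\langle\tilde\Psi,\mathcal{A}_{\tau_n}\Psi\rangle\rangle$, integrate by parts in $s$, cancel the delayed-feedback part of $\tau_n P_1\Psi$ against the $s=-1$ boundary term, and use self-adjointness of $e^{-\alpha x}P_0$ and of multiplication by $\Tilde K(r)$ under $\langle\cdot,\cdot\rangle_1$ together with Fubini to produce the transposed kernel $K(y,x)$. The only difference is that the paper merely verifies the duality identity for $\tilde\Psi$ already in the stated domain, whereas you additionally invoke a density argument to derive the boundary condition as necessary; this is a harmless (indeed slightly stronger) refinement of the same computation.
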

\begin{proof}
For $\Psi\in\mathscr{D}(\mathcal{A}_{\tau_n})$ and $\tilde\Psi\in
\mathscr{D}(\mathcal{A}^*_{\tau_n})$,
\begin{equation*}
\begin{split}
&\langle\langle
\tilde\Psi,\mathcal{A}_{\tau_n}\Psi\rangle\rangle\\
=&\left\langle
\tilde\Psi(0),
(\mathcal{A}_{\tau_n}\Psi)(0)\right\rangle_1-r\tau_n\int_{-1}^0\left\langle\tilde\Psi(s+1),
u_r\int_0^L K(x,y)e^{\alpha y}\dot\Psi(s)(y)dy\right\rangle_1 ds\\
\end{split}
\end{equation*}
\begin{equation*}
\begin{split}
=&\left\langle\tilde\Psi(0),\tau_n e^{-\alpha x}P_0 \Psi(0)+\tau_nP_1\Psi\right\rangle_1\\
-&r\tau_n\left[\left\langle\tilde\Psi(s+1), u_r \int_0^L K(x,y) e^{\alpha y}\Psi(s)(y) dy\right\rangle_1\right]_{-1}^{0}\\
+&r\tau_n\int_{-1}^0\left\langle\dot{\tilde\Psi}(s+1),
u_r \int_0^L K(x,y)e^{\alpha y}\Psi(s)(y)dy\right\rangle_1 ds\\
=&\left\langle
(\mathcal{A}^*_{\tau_n}\tilde\Psi)(0),\Psi(0)\right\rangle_1
-r\tau_n\int_{-1}^0\left\langle-\dot{\tilde\Psi}(s+1),
u_r \int _0^L K(x,y)e^{\alpha y}\Psi(s)(y)dy\right\rangle_1 ds\\
=&\langle\langle
\mathcal{A}^*_{\tau_n}\tilde\Psi,\Psi\rangle\rangle.
\end{split}
\end{equation*}
This completes the proof.
\end{proof}
It follows from Theorem \ref{thm36} that $\mathcal {A}_{\tau_n}$ has
only one pair of simple purely imaginary eigenvalues $\pm i \nu_r\tau_n$, and the associated eigenfunction with respect to
$i\nu_r\tau_n$ (respectively, $-i\nu_r\tau_n$) is $\psi_r e^{ i \nu_r\tau_n\theta}$
(respectively, $\overline{\psi_r} e^{-i\nu_r\tau_n\theta}$) for $\theta\in[-1,0]$, where
$\psi_r$ is defined as in Theorem \ref{c25}.
Similarly, it follows from Theorem \ref{chadual}, Remark \ref{redual} and Lemma \ref{dualoperator} that the operator $\mathcal {A}^*_{\tau_n}$ also has only one pair of simple purely imaginary
eigenvalues $\pm i \nu_r\tau_n$, and the corresponding
eigenfunction with respect to $-i\nu_r\tau_n$ (respectively, $i\nu_r\tau_n$) is $\tilde\psi_r(x)
e^{i \nu_r\tau_n s}$ (respectively,
$\overline{\tilde\psi_r}(x) e^{i \nu_r\tau_ns}$) for $s\in[0,1]$,
where $\tilde\psi_r$ is defined in Theorem \ref{chadual}.
From \cite{wu1996theory}, we see that the center subspace of Eq. \eqref{ab} is
$P=\text{span}\{p(\theta),\overline {p}(\theta)\}$, where
$p(\theta)=\psi_r e^{ i \nu_r\tau_n\theta}$ is the eigenfunction
of $\mathcal{A}_{\tau_n}$ with respect to $i\nu_r\tau_n$, and the formal adjoint subspace of $P$ with respect to the
bilinear form \eqref{bil} is $P^*=\text{span}\{q(s),\overline{
q}(s)\}$, where $q(s)=\tilde\psi_r e^{ i \nu_r\tau_ns}$ is the
eigenfunction of $\mathcal{A}^*_{\tau_n}$ with respect to
$-i\nu_r\tau_n$. Denote $\Phi_I=(p(\theta),\overline p(\theta))$,
$\Psi_I=\ds\f{1}{\overline {S_n}(r)}(q(s),\overline {q}(s))^{T}$,
where $S_n(r)$ is defined as in Eq. \eqref{sn}, and then
$\langle\langle\Psi_I,\Phi_I\rangle\rangle= I$, where $I$ is the
identity matrix in $\mathbb{R}^{2\times2}$.

Note that formulas for the direction and
stability of Hopf bifurcation are all relative to $\gamma=0$ only,  let $\gamma=0$ in
Eq. \eqref{ab}, and we obtain a center manifold as follows
\begin{equation}\label{center}w(z,\overline
z)=w_{20}(\theta)\ds\frac{z^{2}}{2}+w_{11}(\theta)z\overline
z+w_{02}(\theta)\ds\frac{\overline z^{2}}{2}+O(|z|^3).\end{equation}
The solution semi-flow of Eq. \eqref{ab} on the center
manifold is
\begin{equation*}U_t=\Phi_I\cdot (z(t),\overline z(t))^{T}+w(z(t),\overline z(t)),\end{equation*}
where $z(t)$ satisfies
\begin{equation}\label{z(t)}
\begin{split}
\dot{z}(t) =&\ds\f{d}{dt}\langle\langle q(s),U_t\rangle\rangle\\
 =& i\nu_r\tau_n z(t)+\ds\f{1}{S_n(r)}\left\langle q(0),
J\left(\Phi_I(z(t),\overline z(t))^{T}+w(z(t),\overline
z(t)),0\right)\right\rangle_1.
\end{split}
\end{equation}
Denote
\begin{equation}
\begin{split}
g(z,\overline z)=&\ds\f{1}{S_n(r)}\left\langle q(0),
J\left(\Phi_I(z(t),\overline z(t))^{T}+w(z(t),\overline
z(t)),0\right)\right\rangle_1\\=&\sum_{2\le i+j\le 3}\f{g_{ij}}{i!j!}z^i\overline z^j+O(|z|^4).
\end{split}
\end{equation}
As in \cite{Chen2012}, we derive
\begin{equation}\label{gij}
\begin{split}
g_{20}=&-\ds\f{2r\tau_n}{S_n(r)}e^{-i\nu_r\tau_n}\int_0^L\int_0^L\overline{\tilde\psi_r}(x)\psi_r(x)K(x,y)e^{\alpha x+\alpha y}\psi_r(y)dxdy,\\
g_{11}=&-\ds\f{r\tau_n}{S_n(r)}e^{i\nu_r\tau_n}\int_0^L\int_0^L\overline{\tilde\psi_r}(x)\psi_r(x)K(x,y)e^{\alpha x+\alpha y}\overline{\psi_r}(y)dxdy\\
-&\ds\f{r\tau_n}{S_n(r)}e^{-i\nu_r\tau_n}\int_0^L\int_0^L\overline{\tilde\psi_r}(x)\overline{\psi_r}(x)K(x,y)e^{\alpha x+\alpha y}\psi_r(y)dxdy,\\
g_{02}=&-\ds\f{2r\tau_n}{S_n(r)}e^{i\nu_r\tau_n}\int_0^L\int_0^L\overline{\tilde\psi_r}(x)\overline{\psi_r}(x)K(x,y)e^{\alpha x+\alpha y}\overline{\psi_r}(y)dxdy,\\
g_{21}=&-\ds\f{2r\tau_n}{S_n(r)}\int_0^L\int_0^L\overline{\tilde\psi_r}(x)\psi_r(x)K(x,y)e^{\alpha x+\alpha y}w_{11}(-1)(y)dxdy\\
-&\ds\f{r\tau_n}{S_n(r)}\int_0^L\int_0^L\overline{\tilde\psi_r}(x)\overline{\psi_r}(x)K(x,y)e^{\alpha x+\alpha y}w_{20}(-1)(y)dxdy\\
-&\ds\f{r\tau_n}{S_n(r)}
e^{i\nu_r\tau_n}\int_0^L\int_0^L\overline{\tilde\psi_r}(x)w_{20}(0)(x)K(x,y)e^{\alpha x+\alpha y}\overline{\psi_r}(y)dxdy\\
-&\ds\f{2r\tau_n}{S_n(r)}e^{-i\nu_r\tau_n}\int_0^L\int_0^L\overline{\tilde\psi_r}(x)w_{11}(0)(x)K(x,y)e^{\alpha x+\alpha y}\psi_r(y)dxdy,
\end{split}
\end{equation}
where $w_{20}(\theta)$
and $w_{11}(\theta)$ are needed to be computed.

Note that $w(z(t),\overline{z}(t))$ satisfies
\begin{equation}\label{w}
\begin{split}
\dot w=&\mathcal{A}_{\tau_n}w+X_0J(\Phi_I(z,\overline z)^{T}+w(z,\overline z),0)\\
-&\Phi_I\langle\langle\Psi_I,X_0J(\Phi_I(z,\overline z)^{T}+w(z,\overline z),0)\rangle\rangle\\
=&\mathcal{A}_{\tau_n}w+H_{20}\ds\frac{z^2}{2}+H_{11}z\overline
z+H_{02}\ds\frac{\overline z^2}{2}+O(|z|^3),
\end{split}
\end{equation}
where $H_{20}$, $H_{11}$ and $H_{02}$ satisfy
\begin{equation*}
\begin{split}
&X_0J(\Phi_I(z,\overline z)^{T}+w(z,\overline z),0)
-\Phi\langle\langle\Psi,X_0J(\Phi_I(z,\overline z)^{T}+w(z,\overline
z),0)\rangle\rangle\\
=&H_{20}\ds\frac{z^2}{2}+H_{11}z\overline
z+H_{02}\ds\frac{\overline z^2}{2}+O(|z|^3).
\end{split}\end{equation*}
By using the chain rule, we see that $w$ also satisfies
\begin{equation*}\dot w=\ds\frac{\partial w(z,\overline z)}{\partial z}\dot z+\ds\frac{\partial w(z,\overline z)}{\partial \overline z}\dot{\overline
z}.
\end{equation*}
Therefore,
\begin{equation}\label{ws}
\begin{cases}
(2i\nu_r\tau_n-\mathcal{A}_{\tau_n})w_{20}=H_{20},\\
-\mathcal{A}_{\tau_n}w_{11}=H_{11}.\\
\end{cases}
\end{equation}
Note that for $\theta\in[-1,0)$,
\begin{equation}\label{H11}
\begin{split}
&H_{20}(\theta)=-(g_{20}p(\theta)+\overline g_{02}\overline
p(\theta)),\\
&H_{11}(\theta)=-(g_{11}p(\theta)+\overline g_{11}\overline
p(\theta)).
\end{split}
\end{equation}
Then, from Eq. \eqref{ws} and \eqref{H11}, $w_{20}$ and $w_{11}$ can be expressed as
\begin{equation}\label{W20}w_{20}(\theta)=\ds\frac{ig_{20}}{\nu_r\tau_n}p(\theta)+
\ds\frac{i\overline g_{02}}{3\nu_r\tau_n}\overline
p(\theta)+E_re^{2i\nu_r\tau_n\theta},
\end{equation}
and
\begin{equation}\label{W11}
w_{11}(\theta)=-\ds\frac{ig_{11}}{\nu_r\tau_n}p(\theta)+\ds\frac{i\overline
g_{11}}{\nu_r\tau_n}\overline p(\theta)+F_r.
\end{equation}
Noticing that
\begin{equation*}
H_{20}(0) =-\left(g_{20}p(0)+\overline g_{02}\overline
p(0)\right)-2r\tau_ne^{-i\nu_r\tau_n}\psi_r\int_0^L
K(x,y)e^{\alpha y}\psi_r(y)dy,
\end{equation*} we see from From Eqs. \eqref{w} and \eqref{ws} with $\theta=0$ that $E_r$ satisfies
\begin{equation*}(2i\nu_r\tau_n-\mathcal{A}_{\tau_n})
E_re^{2i\nu_r\tau_n\theta}\bigg\vert_{\theta=0}=-2r\tau_ne^{-i\nu_r\tau_n}\psi_r\int_0^L
K(x,y)e^{\alpha y}\psi_r(y)dy,
\end{equation*}
that is,
\begin{equation}\label{E}
\Delta(r,2i\nu_r,\tau_n)E_r=2r
e^{-i\nu_r\tau_n}\psi_r\int_0^L K(x,y)e^{\alpha y}\psi_r(y)dy.
\end{equation}
From Corollary \ref{c25}, we have that $2i\nu_r$ is not the
eigenvalue of $A_{\tau_n}(r)$, and hence
\begin{equation*}
E_r=2r
e^{-i\nu_r\tau_n}\Delta(r,2i\nu_r,\tau_n)^{-1}\left(\psi_r\int_0^L
K(x,y)e^{\alpha y}\psi_r(y)dy\right).
\end{equation*}
Similarly,
\begin{equation}\label{F1}
\begin{split}
F_r=&r\Delta(r,0,\tau_n)^{-1}\left(
e^{i\nu_r\tau_n}\psi_r\int_0^L
K(x,y)e^{\alpha y}\overline{\psi_r}(y)dy\right)\\
+&r\Delta(r,0,\tau_n)^{-1}\left(e^{-i\nu_r\tau_n}\overline{\psi_r}\int_0^L
K(x,y)e^{\alpha y}\psi_r(y)dy\right).
\end{split}
\end{equation}
Then, $E_r$ and $F_r$ can be derived in the following.
\begin{lemma}\label{comf}
For $r\in(0,r_2],$  let $E_r$ and $F_r$ be defined as in \eqref{E} and \eqref{F1}.
Then
\begin{equation}\label{EFgostar}
E_r=b_rc_0+\phi_r,\;\;
\end{equation}
where $c_0$ is defined as in Eq. \eqref{ovec}, $\phi_r\in (X_1)_{\mathbb C}$, and $b_r$, $\phi_r$ satisfy
$$\lim_{r\to0} b_r=\ds\f{2i}{1-2i},\;\;\lim_{r\to0} \|\phi_r\|_{Y_{\mathbb C}}=0,$$
and $\lim_{r\to 0} \|F_r\|_{Y_{\mathbb C}}=0$.
\end{lemma}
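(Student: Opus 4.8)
The plan is to exploit that $E_r$ and $F_r$ are obtained by applying $\Delta(r,2i\nu_r,\tau_n)^{-1}$, respectively $\Delta(r,0,\tau_n)^{-1}$, to explicit right-hand sides, and that as $r\to0$ both operators degenerate to $e^{-\alpha x}P_0$, whose kernel is $\mathrm{span}\{c_0\}$. Their inverses therefore blow up like $1/r$, but \emph{only} in the direction of the constants. Accordingly I would work throughout with the splitting $Y_{\mathbb C}=\mathrm{span}\{c_0\}\oplus (Y_1)_{\mathbb C}$, writing $v=(Qv)+(I-Q)v$ with $Qv:=\frac{1}{L}\int_0^L v\,dx$, and using that $P_0$ restricts to a boundedly invertible map $(X_1)_{\mathbb C}\to(Y_1)_{\mathbb C}$ with $\int_0^L P_0 w\,dx=0$. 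The inputs are the limits from Theorems \ref{cha}, \ref{c25} and \ref{chadual}: $u_r,\psi_r,\tilde\psi_r\to c_0$ in $X_{\mathbb C}$, $\theta_r\to\pi/2$, $h_r\to h_0$, whence $\nu_r\tau_n\to\pi/2+2n\pi$ gives $e^{-i\nu_r\tau_n}\to -i$ and $e^{-2i\nu_r\tau_n}\to -1$, together with the identity $c_0\int_0^L\int_0^L K(x,y)e^{\alpha x+\alpha y}\,dx\,dy=\int_0^L m(x)e^{\alpha x}\,dx$ from \eqref{ovec}.

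For $E_r$ I would multiply \eqref{E} by $e^{\alpha x}$ so the leading operator becomes $P_0$, and substitute $E_r=b_rc_0+\phi_r$. Applying $I-Q$ and using $P_0E_r=P_0\phi_r$ shows $P_0\phi_r=r(I-Q)[\,\cdots]$, where every term in the bracket carries a factor $r$ (recall $\nu_r=rh_r$) and stays bounded in $(Y_1)_{\mathbb C}$; inverting $P_0$ and absorbing the $O(r)$ self-coupling of $\phi_r$ for small $r$ gives $\|\phi_r\|_{Y_{\mathbb C}}=O(r)\to0$. Applying $Q$, i.e. integrating the $e^{\alpha x}$-weighted equation over $(0,L)$ and dividing by $r$, produces a scalar relation for $b_r$ whose coefficient tends to $c_0(1-2i)\int_0^L m e^{\alpha x}\,dx\ne0$: the $\tilde K$-term drops out by \eqref{ovec}, the nonlocal term contributes $\int_0^L m e^{\alpha x}\,dx$ after using $e^{-2i\nu_r\tau_n}\to-1$, and the $-2ih_r$-term contributes $-2i\int_0^L m e^{\alpha x}\,dx$ after using $h_0\int_0^L e^{\alpha x}dx=\int_0^L m e^{\alpha x}dx$. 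Evaluating the right-hand side in the same limit then yields $\lim_{r\to0}b_r=\frac{2i}{1-2i}$, which is the assertion for $E_r$.

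The term $F_r$ is the delicate one, and it is here that the $1/r$ degeneracy of $\Delta(r,0,\tau_n)^{-1}$ must be handled with care. Taken separately, each summand in \eqref{F1} is $r\,\Delta(r,0,\tau_n)^{-1}$ applied to a function whose $e^{\alpha x}$-average does \emph{not} vanish, so individually each summand converges to a nonzero constant. The resolution is to combine the two summands before inverting: write $F_r=r\,\Delta(r,0,\tau_n)^{-1}\zeta_r$ with $\zeta_r:=e^{i\nu_r\tau_n}\psi_r\int_0^L K(x,y)e^{\alpha y}\overline{\psi_r}(y)\,dy+e^{-i\nu_r\tau_n}\overline{\psi_r}\int_0^L K(x,y)e^{\alpha y}\psi_r(y)\,dy$. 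The phases $e^{\pm i\nu_r\tau_n}\to\pm i$ working against $\psi_r,\overline{\psi_r}\to c_0$ force the leading parts to cancel, so that $\int_0^L e^{\alpha x}\zeta_r\,dx\to 2\,\mathrm{Re}\!\left(i\,c_0\int_0^L m e^{\alpha x}\,dx\right)=0$ while $\|\zeta_r\|_{Y_{\mathbb C}}$ stays bounded. Setting $W_r:=\Delta(r,0,\tau_n)^{-1}\zeta_r=e_rc_0+\omega_r$, the $(X_1)$-part satisfies $\|\omega_r\|_{Y_{\mathbb C}}=O(\|\zeta_r\|_{Y_{\mathbb C}})$ and is bounded, while the averaged equation shows that $r\,e_r$ equals $\int_0^L e^{\alpha x}\zeta_r\,dx$ divided by a quantity tending to $-c_0\int_0^L m e^{\alpha x}\,dx\ne0$ (up to terms vanishing with $r$), hence $r\,e_r\to0$. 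Therefore $F_r=(r\,e_r)c_0+r\,\omega_r\to0$ in $Y_{\mathbb C}$.

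Two points deserve attention. First, invertibility of $\Delta(r,2i\nu_r,\tau_n)$ and of $\Delta(r,0,\tau_n)$ is guaranteed because $2i\nu_r$ is not an eigenvalue of $A_{\tau_n}(r)$ (Theorem \ref{c25}), while $\Delta(r,0,\tau_n)=\Delta(r,0,0)$ is independent of $\tau$ and $0$ is not an eigenvalue at $\tau=0$ (Theorem \ref{nodelay1}); together with the Fredholm property of $e^{-\alpha x}P_0$ under the compact lower-order perturbations this gives bounded inverses for each fixed $r$. Second, the only genuine obstacle is the non-uniform $1/r$ blow-up of these inverses as $r\to0$, and I expect the $F_r$ estimate to be the crux: the conclusion $\|F_r\|_{Y_{\mathbb C}}\to0$ holds only after the cancellation $\int_0^L e^{\alpha x}\zeta_r\,dx\to0$ obtained by combining the two summands of \eqref{F1}, whereas each summand alone would leave a nonzero limit. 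The boundedness and precompactness needed to pass to the limit are supplied by the same $P_0^{-1}$ and compactness arguments already used in Theorems \ref{cha} and \ref{nodelay1}.
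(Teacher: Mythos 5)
Your proposal is correct and follows essentially the same route as the paper: decompose $E_r=b_rc_0+\phi_r$ in the $e^{\alpha x}$-weighted equation, use the mean-zero component together with the invertibility of $P_0$ on $(X_1)_{\mathbb C}$ to get $\|\phi_r\|_{Y_{\mathbb C}}\to 0$, and use the integrated (constant-direction) equation to pass to the limit $b_r\to 2i/(1-2i)$; the paper's only technical variant is that it bounds $\phi_r$ by multiplying by $\overline{\phi}_r$ and invoking the energy inequality of Lemma \ref{lem21} rather than by applying $P_0^{-1}$. One caveat on ordering: as written, your $(I-Q)$ step asserts the bracket is bounded before $b_r$ is known to be bounded, yet the bracket contains $b_r$-terms whose mean-zero projection is nonzero; you must first extract from the $Q$-equation the relation $|b_r|\le M_0\|\phi_r\|_{Y_{\mathbb C}}+M_1$ (exactly the paper's Eq. \eqref{bet}) and then close the coupled estimates for small $r$ --- a harmless reordering, not a missing idea. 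Your treatment of $F_r$ is the part the paper compresses into ``similarly,'' and it is right: writing $\Delta(r,0,\tau_n)F_r=r\zeta_r$ and decomposing as for $E_r$, the constant component of $F_r$ is governed by $\int_0^L e^{\alpha x}\zeta_r\,dx$, which vanishes in the limit precisely because $e^{\pm i\nu_r\tau_n}\to\pm i$ against $\psi_r,\overline{\psi_r}\to c_0$ makes the two summands cancel (indeed $\zeta_r\to0$ in $Y_{\mathbb C}$), whereas either summand alone would leave a nonzero constant; identifying this cancellation as the crux is exactly what makes the ``similar'' argument go through.
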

\begin{proof} We only prove the estimate for $E_r$, and $F_r$ can be derived similarly.
Substituting Eq. \eqref{EFgostar} in to Eq. \eqref{E}, we have
\begin{equation}\label{cephi}
\begin{split}&\ds\f{1}{r}P_0\phi_r=-e^{\alpha x}\Tilde K(r)(b_rc_0+\phi_r)+
u_re^{\alpha x}\int_{0}^LK(x,y)e^{\alpha y}[b_r
c_0+\phi_{r}(y)]dye^{-2i\nu_r\tau_n}\\
&~~~~~~+2ih_re^{\alpha x}(b_rc_0+\phi_r)+2
e^{-i\nu_r\tau_n}\psi_re^{\alpha x}\int_0^L
K(x,y)e^{\alpha y}\psi_r(y)dy,\end{split}
\end{equation}
where $h_r$ is defined as in Theorem
\ref{cha}.
Integrating Eq. \eqref{cephi} over
$(0,L)$, and noticing that $|h_r|$, $\|u_r\|_\infty$ and $\|\psi_r\|_\infty$ are bounded for $r\in(0,r_2]$, we see that there exist constants
$M_0,~M_1>0$
such that
\begin{equation}\label{bet}
|b_r|\le
M_0\|\phi_{r}\|_{Y_\mathbb{C}}+M_1,
\end{equation}
for any $r\in(0,r_2]$.
Multiplying Eq. \eqref{cephi} by $\overline \phi_r$, and integrating the result over $(0,L)$, we see from Lemma \ref{lem21} and Eq. \eqref{bet} that there exist
constants
$M_2,~M_3>0$
such that  $$\la_2\|\phi_r\|^2_{Y_{\mathbb C}}\le
rM_2\|\phi_{r}\|^2_{Y_\mathbb{C}}+rM_3\|\phi_{r}\|_{Y_\mathbb{C}},$$
for any $r\in(0,r_2]$, where $\la_2$ is defined as in Lemma \ref{lem21}.
This leads to $\lim_{r\to0} \|\phi_r\|_{Y_{\mathbb C}}=0$. Then, integrating Eq. \eqref{cephi} over $(0,L)$, and taking the limit
of the equation at both side as $r\to0$, we obtain
\begin{equation*}
(1-2i)\left(\lim_{r\to0}b_r\right)\int_0^Le^{\alpha x}dx=2i\int_0^Le^{\alpha x}dx,
\end{equation*}
which leads to $\lim_{r\to0} b_r=\f{2i}{1-2i}$. Similarly, we can prove that $\lim_{r\to 0} \|F_r\|_{Y_{\mathbb C}}=0$.
\end{proof}

Therefore, by similar arguments similar to \cite{Chen2012}, one can also derive
\begin{equation}\label{glas}
\lim_{r\to 0}g_{11}=0\;\;\text{and}\;\;\lim_{r\to
0}\mathcal{R}e[g_{21}]<0.
\end{equation}
It follows from \cite{hassard1981theory,wu1996theory} that $C_1(0)$ determines the direction and
stability of bifurcating periodic orbits, where
\begin{equation*}
C_1(0)=\dfrac{i}{2\nu_r\tau_n}\left(g_{11}g_{20}-2|g_{11}|^2
-\dfrac{|g_{02}|^2}{3}\right)+\dfrac{g_{21}}{2}.
\end{equation*}
Then, Eq. \eqref{glas} implies $\lim_{r\to0}\mathcal{R}e[C_1(0)]<0$. Hence we have the
following result.
\begin{theorem}\label{T3.3}
Assume that $r\in(0,r_2]$, where $0<r_2\ll 1$. Let
$\{\tau_n(r)\}_{n=0}^\infty$ be the Hopf bifurcation points of Eq. \eqref{delay} obtained in Theorem \ref{thm37}.
Then, for each $n\in \mathbb N \cup \{0\}$, the direction of the
Hopf bifurcation at $\tau=\tau_n$ is forward and the bifurcating periodic solutions
from $\tau=\tau_0$ is orbitally asymptotically stable.
\end{theorem}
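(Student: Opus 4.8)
The plan is to read off the direction and stability directly from the normal-form coefficient $C_1(0)$ displayed just above the statement, using the standard Hopf bifurcation dictionary of \cite{hassard1981theory}. All the analytic work has in fact been front-loaded into the limit estimate \eqref{glas} and the transversality condition of Theorem \ref{thm35}, so the proof is essentially a sign chase together with a spectral bookkeeping that separates the case $n=0$ from $n\ge1$.

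First I would fix $r_2$. From \eqref{glas} we have $\lim_{r\to0}\mathcal{R}e[C_1(0)]<0$, so by continuity there is $0<r_2\ll1$ such that $\mathcal{R}e[C_1(0)]<0$ for all $r\in(0,r_2]$; the dependence on $n$ enters $C_1(0)$ only through $\tau_n$ and the uniformly bounded coefficients $g_{ij}$, so this negativity is retained for every $n$. Next, to settle the direction I would invoke the formula $\mu_2=-\mathcal{R}e[C_1(0)]/\mathcal{R}e[\mu'(\tau_n)]$. Theorem \ref{thm35} supplies $\frac{d\mathcal{R}e[\mu(\tau_n)]}{d\tau}>0$, hence $\mathcal{R}e[\mu'(\tau_n)]>0$, and together with $\mathcal{R}e[C_1(0)]<0$ this gives $\mu_2>0$. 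Thus the Hopf bifurcation at every $\tau=\tau_n$ is forward.

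For the stability claim I would use $\beta_2=2\mathcal{R}e[C_1(0)]<0$, which guarantees that the bifurcating orbit is attracting inside the two-dimensional center manifold. Upgrading this to orbital asymptotic stability in the full phase space requires that $u_r$ carry no eigenvalue with positive real part at the bifurcation point other than the crossing pair $\pm i\nu_r$. This holds precisely at $\tau_0$: by Theorem \ref{nodelay1} all eigenvalues of $A_\tau(r)$ have negative real parts at $\tau=0$, and by Theorem \ref{thm36} none has crossed into the right half-plane for $\tau\in[0,\tau_0)$, so at $\tau_0$ only $\pm i\nu_r$ lie on the imaginary axis while the rest of the spectrum stays to the left. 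Hence the orbits born at $\tau_0$ are orbitally asymptotically stable. For $n\ge1$, Theorem \ref{thm36} shows that $u_r$ already possesses $2n$ eigenvalues with positive real parts at $\tau_n$, so the corresponding periodic orbits are unstable and only the direction conclusion applies, exactly as stated.

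The hard part is not in this final theorem but one step earlier, in establishing \eqref{glas}, namely $\lim_{r\to0}g_{11}=0$ and $\lim_{r\to0}\mathcal{R}e[g_{21}]<0$. That limit rests on feeding the asymptotics of $E_r$ and $F_r$ from Lemma \ref{comf} into the explicit expressions \eqref{gij} for the $g_{ij}$, together with the limiting data $\theta_r\to\pi/2$, $r\tau_n\to\pi/2+2n\pi$ and $\psi_r,\tilde\psi_r\to c_0$ obtained in Theorems \ref{cha}, \ref{c25} and \ref{chadual}. Once \eqref{glas} is granted, the present theorem follows immediately from the sign chase above.
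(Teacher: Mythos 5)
Your proposal is correct and follows essentially the same route as the paper: the paper likewise treats Theorem \ref{T3.3} as an immediate consequence of \eqref{glas} (whose derivation via Lemma \ref{comf} and the limits $\theta_r\to\pi/2$, $r\tau_n\to\pi/2+2n\pi$, $\psi_r,\tilde\psi_r\to c_0$ is the real content), which forces $\lim_{r\to0}\mathcal{R}e[C_1(0)]<0$, and then reads off direction and stability from the standard dictionary of \cite{hassard1981theory,wu1996theory} together with the transversality condition of Theorem \ref{thm35}. Your extra spectral bookkeeping (stability in the full phase space only at $\tau_0$, since Theorem \ref{thm36} gives $2n$ unstable eigenvalues at $\tau_n$ for $n\ge1$) is left implicit in the paper but matches exactly why the statement restricts the stability claim to $\tau_0$.
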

\section{The effect of spatial heterogeneity}
In this section, we will consider the effect of spatial heterogeneity on Hopf bifurcation values.
It follows from Lemma \ref{l25}, Theorems \ref{thm36} and \ref{thm37} that the first Hopf bifurcation value $\tau_0$ of Eq. \eqref{delay} depends on $r$, $\alpha$, $L$, and satisfies:
\begin{equation}\label{ho0ii}
\begin{split}
\tau_0(r,\alpha,L)=&\frac{\theta_r(\alpha,L)}{r h_r(\alpha,L)},\;\;\lim_{r\to0}\theta_r(\alpha,L)=\ds\f{\pi}{2},\\
\lim_{r\to0}h_r(\alpha,L)=&h_{0}(\alpha,L)=\ds\f{\int_0^L m(x)e^{\alpha x}dx}{\int_0^Le^{\alpha x}dx}.
\end{split}
\end{equation}
If $m(x)\equiv m_0$, where $m_0$ is a positive constant, then
$$h_0(\alpha,L)=m_0\; \;\text{and}\;\;\lim_{r\to0} r\tau_0(r,\alpha,L)=\ds\f{\pi}{2m_0}$$
for any $\alpha\in(-\infty,\infty)$ and $L>0$, and hence $\tau_0(r,\alpha,L)\approx \ds\f{\pi}{2rm_0 }$ for small $r$.
It seems that the value of $\tau_0(r,\alpha,L)$ has no significant change
as advection $\alpha$ or spatial scale $L$ changes, when $m(x)$ is spatially homogeneous.

Then we consider the case that $m(x)$ is spatially heterogeneous. We find that
Hopf bifurcation is more likely to occur as spatial scale $L$ increases, if $m(x)$ achieve its maximum
at boundary $x=L$.
\begin{proposition}
Suppose that $m(x)$ is non-constant, $m(L)=\max_{x\in[0,L]} m(x)$, $\alpha\in(-\infty, \infty)$, and $L_1>L_2>0$. Then there exists $\tilde r>0$, depending on $L_1$, $L_2$ and $\alpha$, such that $\tau_0(r,\alpha,L_1)<\tau_0(r,\alpha, L_2)$ for $t\in(0,\tilde r]$.
\end{proposition}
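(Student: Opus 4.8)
The plan is to reduce the comparison of the two bifurcation values to a comparison of their limiting behavior as $r\to0$, and then to establish a monotonicity property of the weighted average $h_0(\alpha,L)$ in $L$. From \eqref{ho0ii} one has $r\tau_0(r,\alpha,L)=\theta_r(\alpha,L)/h_r(\alpha,L)\to(\pi/2)/h_0(\alpha,L)$ as $r\to0$, so that
$$\lim_{r\to0}r\tau_0(r,\alpha,L_i)=\ds\f{\pi}{2h_0(\alpha,L_i)},\qquad i=1,2.$$
Hence, once the strict inequality $h_0(\alpha,L_1)>h_0(\alpha,L_2)$ is known, the two limits satisfy $\pi/(2h_0(\alpha,L_1))<\pi/(2h_0(\alpha,L_2))$, and a standard $\varepsilon$-argument (take $\varepsilon$ to be half the gap between the two limits) produces $\tilde r>0$, depending only on $L_1,L_2,\alpha$, such that $r\tau_0(r,\alpha,L_1)<r\tau_0(r,\alpha,L_2)$ for $r\in(0,\tilde r]$; dividing by $r>0$ gives the conclusion. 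This reduction is routine, and the crux is therefore the monotonicity of $L\mapsto h_0(\alpha,L)$.

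To handle the monotonicity, I would write $h_0(\alpha,L)=N(L)/D(L)$ with $N(L)=\int_0^Lm(x)e^{\alpha x}dx$ and $D(L)=\int_0^Le^{\alpha x}dx$, and differentiate in $L$. Since $N'(L)=m(L)e^{\alpha L}$ and $D'(L)=e^{\alpha L}$, this yields
$$\ds\f{\partial h_0}{\partial L}=\ds\f{e^{\alpha L}\bigl(m(L)D(L)-N(L)\bigr)}{D(L)^2}=\ds\f{e^{\alpha L}\int_0^L\bigl(m(L)-m(x)\bigr)e^{\alpha x}dx}{\left(\int_0^Le^{\alpha x}dx\right)^2}.$$
Under the hypothesis $m(L)=\max_{x\in[0,L]}m(x)$ the integrand $(m(L)-m(x))e^{\alpha x}$ is nonnegative, so $\partial h_0/\partial L\ge0$; and because $m$ is continuous and non-constant, $m(L)-m(x)>0$ on a set of positive measure, making the derivative strictly positive. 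Integrating over $[L_2,L_1]$ then gives $h_0(\alpha,L_1)>h_0(\alpha,L_2)$, which feeds back into the reduction above.

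The step I expect to be the main obstacle is pinning down the precise role of the maximum hypothesis. The sign of $\partial h_0/\partial L$ at a point $L$ requires $m(L)=\max_{[0,L]}m$ at that same $L$, so to integrate across $[L_2,L_1]$ one needs the condition throughout the interval (equivalently, $m$ nondecreasing there), not merely at the endpoints: a function $m$ that dips between $L_2$ and $L_1$ and only returns to its maximum at $L_1$ can reverse the inequality, so I would read the hypothesis as holding for every $L$ in the range considered. Strictness also needs care, since if $m$ is constant on $[0,L_2]$ the derivative may vanish at $L_2$; however, non-constancy of $m$ on $[0,L_1]$ forces $\partial h_0/\partial L>0$ on a positive-measure subset of $[L_2,L_1]$, which suffices for the strict inequality. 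With this settled, the monotonicity step together with the limit reduction completes the proof.
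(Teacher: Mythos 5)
Your proposal is correct and takes essentially the same route as the paper: differentiate $h_0(\alpha,L)$ in $L$ to obtain $\partial h_0/\partial L=\frac{e^{\alpha L}\int_0^L\left[m(L)-m(x)\right]e^{\alpha x}dx}{\left(\int_0^L e^{\alpha x}dx\right)^2}\ge 0$, conclude strict monotonicity of $h_0$ in $L$, and then compare via the limit $\lim_{r\to0}r\tau_0(r,\alpha,L)=\pi/\bigl(2h_0(\alpha,L)\bigr)$. Your extra care about the hypothesis---that $m(L)=\max_{[0,L]}m$ must hold at every $L$ in $[L_2,L_1]$ (i.e., $m$ nondecreasing there), and the separate handling of strictness when $m$ is constant on part of the interval---is a sensible sharpening of what the paper's proof implicitly assumes when it asserts $h_0(\alpha,\cdot)$ is strictly increasing on all of $(0,\infty)$, but it does not change the underlying argument.
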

\begin{proof}
Since \begin{equation*}
\ds \f{\partial h_0(\alpha,L)}{\partial L}=\ds\f{e^{\alpha L}\int_0^L\left[m(L)-m(x)\right]e^{\alpha x}dx}{\left(\int_0^L e^{\alpha x} dx\right)^2}>0,
\end{equation*}
we see that, for any fixed $\alpha\in(-\infty,\infty)$, $h_0(\alpha, L)$ is strictly increasing for $L\in(0,\infty)$. Note that
$$\tau_0(r,\alpha,L)=\frac{\theta_r(\alpha,L)}{r h_r(\alpha,L)}\;\;\text{and}\;\;\lim_{r\to0}r\tau_0(r,\alpha,L)=\ds\f{\pi}{2h_0(\alpha,L)}.$$
It follows that there exists $\tilde r>0$, depending on $L_1$, $L_2$ and $\alpha$, such that $\tau_0(r,\alpha,L_1)<\tau_0(r,\alpha, L_2)$ for $t\in(0,\tilde r]$.
\end{proof}
In the following we will choose different types of $m(x)$ to show the effect of spatial heterogeneity.

\begin{example}\label{exam3} Choose
\begin{equation}\label{p13ii}
m(x)=x.
\end{equation}
In this case,
\begin{equation*}
\begin{split}
h(\alpha,L)=&\ds\f{\alpha L e^{\alpha L}-e^{\alpha L}+1}{\alpha(e^{\alpha L}-1)},\;\;h_0(0,L)=\ds\f{L}{2},\\
\ds \f{\partial h_0(\alpha,L)}{\partial a}=&\ds\frac{\int_0^L x^2e^{\alpha x} dx\int_0^L e^{\alpha x}dx-\left(\int_{0}^L x e^{\alpha x}dx\right)^2}{\left(\int_0^L e^{\alpha x}dx\right)^2}>0,\\
\ds \f{\partial h_0(\alpha,L)}{\partial L}=&\ds\f{e^{\alpha L}\left(e^{\alpha L}-\alpha L -1\right)}{\left (e^{\alpha L}-1\right)^2}>0.
\end{split}
\end{equation*}
Consequently, if we choose \begin{equation}\label{p14ii}
m(x)=m_0-x,
\end{equation}
where $m_0$ is a constant and $m_0>L$,
then
\begin{equation*}
\ds \f{\partial h_0(\alpha,L)}{\partial a}<0,\;\;\ds \f{\partial h_0(\alpha,L)}{\partial L}<0.
\end{equation*}
Then we have the following two statements on the effect of advection $\alpha$.
\begin{enumerate}
\item Assume that $L\in(0,\infty)$, $m(x)=x$ and $\alpha_1>\alpha_2$. Then there exists $\tilde r>0$, depending on $\alpha_1$, $\alpha_2$ and $L$, such that $\tau_0(r,\alpha_1,L)<\tau_0(r,\alpha_2,L)$ for $r\in(0,\tilde r]$.
\item Assume that $L\in(0,\infty)$, $m(x)=m_0-x$, where $m_0>L$, and $\alpha_1>\alpha_2$. Then there exists $\tilde r>0$, depending on $\alpha_1$, $\alpha_2$ and $L$, such that $\tau_0(r,\alpha_1,L)>\tau_0(r,\alpha_2,L)$ for $r\in(0,\tilde r]$.
\end{enumerate}
Therefore, Hopf bifurcation is more likely to occur when the advection rate increases (respectively, decreases) for $m(x)=x$ (respectively, $m(x)=m_0-x$, where $m_0>L$).
Similarly, we have the following two statements on the effect of spatial scale $L$.
\begin{enumerate}
\item Assume that $\alpha\in(-\infty,\infty)$, $m(x)=x$ and $L_1>L_2$. Then there exists $\tilde r>0$, depending on $L_1$, $L_2$ and $\alpha$, such that $\tau_0(r,\alpha,L_1)<\tau_0(r,\alpha,L_2)$ for $r\in(0,\tilde r]$.
\item Assume that $\alpha\in(-\infty,\infty)$, $m(x)=m_0-x$, where $m_0>L$, and $L_1>L_2$. Then there exists $\tilde r>0$, depending on $L_1$, $L_2$ and $\alpha$, such that $\tau_0(r,\alpha,L_1)>\tau_0(r,\alpha,L_2)$ for $r\in(0,\tilde r]$.
\end{enumerate}
Therefore, Hopf bifurcation is more likely to occur when spatial scale $L$ increases (respectively, decreases) for $m(x)=x$ (respectively, $m(x)=m_0-x$, where $m_0>L$).

\end{example}

\begin{example}\label{exam2} Choose
\begin{equation}\label{p13i}
m(x)=\sin \f{\pi x}{L}.
\end{equation}
In this case,
\begin{equation*}
h(\alpha,L)=\ds\f{\pi\alpha L \left(e^{\alpha L}+1\right)}{\left(\pi^2+\alpha^2L^2\right)(e^{\alpha L}-1)},\;\;h_0(0,L)=\ds\f{2}{\pi}.
\end{equation*}
Therefore, if $\alpha L>\pi$, then
$$\ds \f{\partial h_0(\alpha,L)}{\partial a}<0\;\;\text{and}\;\;\ds \f{\partial h_0(\alpha,L)}{\partial L}<0.$$
Consequetly, we have the following two statements on the effects of advection $\alpha$ and spatial scale $L$.
\begin{enumerate}
\item Assume that $\alpha_1>\alpha_2>\pi/L$. Then there exists $\tilde r>0$, depending on $\alpha_1$, $\alpha_2$ and $L$, such that $\tau_0(r,\alpha_1,L)>\tau_0(r,\alpha_2,L)$ for $r\in(0,\tilde r]$.
\item Assume that $L_1>L_2>\pi/\alpha$. Then there exists $\tilde r>0$, depending on $L_1$, $L_2$ and $\alpha$, such that $\tau_0(r,\alpha_1,L)>\tau_0(r,\alpha_2,L)$ for $r\in(0,\tilde r]$.
\end{enumerate}
Therefore, Hopf bifurcation is more likely to occur when advection rate $\alpha>\pi/L$ decreases or spatial scale $L>\pi/\alpha$ decreases.

\end{example}

\begin{center}
\vskip1cm {\small
\bibliographystyle{amsalpha}
}
\end{center}

\end{document}